\documentclass[a4paper, 11pt]{amsproc}

\usepackage{amsmath, amsthm, amssymb, mathtools, mathrsfs, stmaryrd}
\usepackage{bm}
\usepackage{shuffle}

\usepackage{hyperref}

\usepackage{xcolor}
\usepackage[capitalize,nameinlink,noabbrev,nosort]{cleveref}
\hypersetup{
	colorlinks=true,       
	linkcolor=blue,          
	citecolor=blue,        
	filecolor=blue,      
	urlcolor=blue,           
}

\usepackage{fullpage}

\makeatletter
\@namedef{subjclassname@2020}{%
  \textup{2020} Mathematics Subject Classification}
\makeatother

\newtheorem{theoremcounter}{Theorem Counter}[section]

\theoremstyle{definition}

\newtheorem{remark}[theoremcounter]{Remark}

\theoremstyle{plain}
\newtheorem{lemma}[theoremcounter]{Lemma}
\newtheorem{proposition}[theoremcounter]{Proposition}
\newtheorem{corollary}[theoremcounter]{Corollary}

\newtheorem{theorem}[theoremcounter]{Theorem}

\numberwithin{equation}{section}

\newcommand{\R}{\mathbb{R}}
\newcommand{\C}{\mathbb{C}}

\DeclareMathOperator{\ImNew}{Im}
\renewcommand{\Im}{\ImNew}
\DeclareMathOperator{\ReNew}{Re}
\renewcommand{\Re}{\ReNew}

\begin{document}

\title{Lower-order terms in mean-square formulas for the Euler--Zagier double zeta-function and regularized multiple zeta values}

\subjclass[2020]{Primary 11M32}
\keywords{Euler--Zagier double zeta-function, mean-square formula,
multiple zeta values, harmonic regularization}

\author{Tomokazu Onozuka}
\address[Tomokazu Onozuka]{Faculty of Science and Technology, Oita University, 700 Dannoharu, Oita, 870-1192, Japan} \email{t-onozuka@oita-u.ac.jp}

\begin{abstract}
We study the mean square of the Euler--Zagier double zeta-function when the second variable moves vertically. 
Mean-square formulas in and beyond the region of absolute convergence were previously obtained by Matsumoto and Tsumura and by Ikeda, Matsuoka and Nagata. 
In particular, the latter authors determined the leading terms in the following three boundary cases:
\[
 \zeta_2\left(1+a,\frac12+it\right),\qquad
 \zeta_2\left(1+ib,\frac12+it\right),\qquad
 \zeta_2\left(1-a,\frac12+a+it\right),
\]
where $a\in\mathbb C$ with $\Re a>0$ and $b\in\mathbb R$ are fixed. 
The mean squares in the first and third cases, as well as in the second case when $b\neq0$, have leading terms of order $T\log T$. 
When $b=0$, the second case reduces to the corner $(1,1/2)$, where the leading term is of order $T(\log T)^3$.

In the present paper, we refine these results by obtaining asymptotic formulas with remainder $o(T)$. 
In the case $b\neq0$, an additional oscillatory term of order $T$ occurs.
We also show that, at the parameter points $(s_1,\sigma_2)=(k,\ell/2)$ with integers $k\geq1$ and
$\ell\geq2$, the constant multiplying $T$ in the mean-square formula is a linear combination of multiple zeta values.
At the boundary points $(k,1/2)$ with integers $k\geq1$, the harmonic finite parts of the corresponding divergent sums are expressed in terms of harmonic regularized multiple zeta values.
\end{abstract}

\maketitle

\section{Introduction}
\label{sec:introduction}

The mean square of the Riemann zeta-function is one of the most fundamental objects in analytic number theory. 
For $\sigma>1/2$, one has
\begin{align*}
 \int_2^T|\zeta(\sigma+it)|^2dt  &\sim  \zeta(2\sigma)T,
\end{align*}
whereas on the critical line,
\begin{align}
 \int_2^T \left|\zeta\left(\frac12+it\right)\right|^2dt
 &= T\log\frac{T}{2\pi} +(2\gamma-1)T +o(T). 
 \label{eq:intro-zeta-critical}
\end{align}
Thus the form of the main term changes when $\sigma$ reaches $1/2$. 
From the viewpoint of Dirichlet series, this transition reflects the divergence of the coefficient $\zeta(2\sigma)$ at $\sigma=1/2$.

The purpose of the present paper is to study an analogous transition for the Euler--Zagier double zeta-function
\begin{align}
 \zeta_2(s_1,s_2) &:= \sum_{1\leq m<n} \frac1{m^{s_1}n^{s_2}}. 
 \label{eq:intro-double-zeta}
\end{align}
The series in \eqref{eq:intro-double-zeta} converges absolutely in the region $\{(s_1,s_2)\mid\Re s_2>1,~ \Re s_1+\Re s_2>2\}$, and $\zeta_2(s_1,s_2)$ can be continued meromorphically to $\mathbb C^2$;
see, among others, \cite{AkiyamaEgamiTanigawa,Zhao}.
The double zeta-function was already used by Atkinson \cite{Atkinson} in his study of the error term in the mean square of the Riemann zeta-function, and its analytic properties have subsequently been investigated from several points of view.

Matsumoto and Tsumura \cite{MatsumotoTsumura} introduced and studied the mean square
\[
 \int_2^T|\zeta_2(s_1,\sigma_2+it)|^2dt
\]
with $s_1$ fixed. 
To describe the coefficient of its main term, put
\[
 H_N^{(w)}
 := \sum_{m=1}^{N}\frac1{m^w},
 \qquad
 H_0^{(w)}:=0,
\]
and, for a complex variable $\rho$, define
\begin{align}
 \zeta_2^{[2]}(s_1,\rho)
 &:= \sum_{n=1}^{\infty} \frac{|H_{n-1}^{(s_1)}|^2}{n^\rho}.
 \label{eq:intro-diagonal-coefficient}
\end{align}
The defining series converges precisely when $\Re\rho>1,~2\Re s_1+\Re\rho>3$.
The results of Matsumoto and Tsumura
\cite{MatsumotoTsumura}, together with the extension by
Ikeda, Matsuoka and Nagata
\cite[Theorem~1.2]{IkedaMatsuokaNagata}, give the following
formula throughout this range, provided that the path of
integration does not pass through a pole of $\zeta_2$:
\begin{align}
 \int_2^T|\zeta_2(s_1,\sigma_2+it)|^2dt
 &= \zeta_2^{[2]}(s_1,2\sigma_2)T+o(T).
 \label{eq:intro-MT-form}
\end{align}
Since the second argument on the right-hand side is $\rho=2\sigma_2$, the conditions $\Re\rho>1,~2\Re s_1+\Re\rho>3$ become
\begin{align}
 \sigma_2>\frac12,
 \qquad
 \Re s_1+\sigma_2>\frac32.
 \label{eq:intro-diagonal-region}
\end{align}
This led Matsumoto and Tsumura to suggest that the line $\Re s_1+\sigma_2=3/2$ should play a role analogous to that of the critical line for the Riemann zeta-function.

Ikeda, Matsuoka and Nagata \cite{IkedaMatsuokaNagata} subsequently investigated three different types of mean values of the double zeta-function and introduced corresponding approximation formulas. 
Among them, the case in which the second variable moves vertically is particularly relevant to the present paper.
Put $\sigma_1=\Re s_1$. 
Assuming that the path of integration does not meet the polar locus, their results on the boundary can be summarized as follows:
\begin{align}
 &\int_2^T |\zeta_2(s_1,\sigma_2+it)|^2dt\notag\\
 &\qquad=
 \begin{cases}
  |\zeta(s_1)|^2T\log T+O_{s_1}(T),
    &\sigma_1>1,~
    \sigma_2=\frac12,
   \\[8pt]
  \left(
   |\zeta(s_1)|^2+\dfrac1{|s_1-1|^2}
  \right)T\log T+O_{s_1}(T),
    &\sigma_1=1,~ s_1\neq1,~\sigma_2=\frac12,
   \\[12pt]
  \dfrac1{|s_1-1|^2}T\log T+O_{s_1}(T),
    &\sigma_1+\sigma_2=\frac32,~\sigma_2>\frac12,
   \\[12pt]
  \dfrac13T(\log T)^3
  +O\left(T(\log T)^2\right),
   &
   s_1=1,~ \sigma_2=\frac12.
 \end{cases}
 \label{eq:intro-IMN-boundary}
\end{align}

Mean-square formulas in wider parts of the critical region were obtained by Kiuchi and Minamide
\cite{KiuchiMinamide}. 
Related mean-value problems for the double zeta-function were also studied by Ikeda, Kiuchi and Matsuoka \cite{IkedaKiuchiMatsuoka}. 
The error estimates in the formulas of Kiuchi and Minamide were later improved by Banerjee, Minamide
and Tanigawa \cite{BanerjeeMinamideTanigawa}, using refined estimates for the remainder in an analytic decomposition of the double zeta-function.

In the present paper, we refine formulas \eqref{eq:intro-IMN-boundary} by determining the terms of order $T$.
For the case $\sigma_1=1,~ s_1\neq1,~\sigma_2=1/2$, this includes an oscillatory term of order $T$, which is contained in the $O(T)$-term of the preceding result. 
We also investigate the arithmetic structure of the coefficients and their relation to multiple zeta values and harmonic regularizations.

Our first observation is that the coefficient $\zeta_2^{[2]}(s_1,2\sigma_2)$ itself is a linear combination of Euler--Zagier multiple zeta-functions.
More generally, we use the increasing-index convention
\[
 \zeta_r(s_1,\ldots,s_r)
 := \sum_{1\leq n_1<\cdots<n_r} \frac{1}{n_1^{s_1}\cdots n_r^{s_r}},
\]
initially in its region of absolute convergence, and denote its meromorphic continuation by the same symbol.

\begin{theorem}\label{thm:intro-diagonal-MZV}
Let $s_1,\rho\in\C$, and put $\sigma_1=\Re s_1$. 
Suppose that $\Re\rho>1,~2\sigma_1+\Re\rho>3$.
Then the defining series for $\zeta_2^{[2]}(s_1,\rho)$ converges absolutely, and
\begin{align}
\zeta_2^{[2]}(s_1,\rho) = \zeta_3(s_1,\overline{s_1},\rho)
+\zeta_3(\overline{s_1},s_1,\rho)
+\zeta_2(2\sigma_1,\rho).
\label{eq:intro-diagonal-MZV}
\end{align}
If $\rho=2\sigma_2$, where $\sigma_2\in\mathbb R$, the above conditions become $\sigma_2>1/2$ and
$\sigma_1+\sigma_2>3/2$.
Assume in addition that the path $(s_1,\sigma_2+it)$, $t\geq2$, does not pass through a pole
of $\zeta_2$.
Then the mean-square formula \eqref{eq:intro-MT-form} gives
\begin{align*}
\int_2^T \left| \zeta_2(s_1,\sigma_2+it) \right|^2dt
=\Bigl(
\zeta_3(s_1,\overline{s_1},2\sigma_2) + \zeta_3(\overline{s_1},s_1,2\sigma_2) + \zeta_2(2\sigma_1,2\sigma_2)
\Bigr)T + o(T).
\end{align*}
In particular, if $k\geq1$ and $\ell\geq2$ are integers, then
\begin{align*}
\zeta_2^{[2]}(k,\ell) = 2\zeta_3(k,k,\ell)+\zeta_2(2k,\ell),
\end{align*}
and hence
\begin{align*}
\int_2^T \left| \zeta_2\left(k,\frac{\ell}{2}+it\right) \right|^2 dt
= \left( 2\zeta_3(k,k,\ell)+\zeta_2(2k,\ell) \right)T + o_{k,\ell}(T).
\end{align*}
Thus, at these integral and half-integral points, the coefficient of $T$ is a linear combination of convergent multiple zeta values.
\end{theorem}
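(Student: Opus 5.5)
The plan is to expand $|H_{n-1}^{(s_1)}|^2$ as a double sum, split it according to the relative order of the two summation indices, and interchange summations. Absolute convergence comes first, so that all later rearrangements are justified. Write $H_{n-1}^{(s_1)}\overline{H_{n-1}^{(s_1)}}=\sum_{m_1,m_2<n} m_1^{-s_1}m_2^{-\overline{s_1}}$. The series $\sum_n n^{-\Re\rho}\sum_{m_1,m_2<n}m_1^{-\sigma_1}m_2^{-\sigma_1}=\sum_n (H_{n-1}^{(\sigma_1)})^2 n^{-\Re\rho}$ dominates everything in absolute value. Here $H_{n-1}^{(\sigma_1)}$ is $O(1)$ if $\sigma_1>1$, $O(\log n)$ if $\sigma_1=1$, and $O(n^{1-\sigma_1})$ if $\sigma_1<1$. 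Under $\Re\rho>1$ and $2\sigma_1+\Re\rho>3$, the majorant converges in each case: for $\sigma_1<1$ the terms are $O(n^{2-2\sigma_1-\Re\rho})$ with exponent $<-1$, and for $\sigma_1=1$ we have $\Re\rho>1$ together with the logarithmic factor. This gives absolute convergence of $\zeta_2^{[2]}(s_1,\rho)$ and of the triple series obtained after expansion.

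Next I split the inner double sum into the parts $m_1<m_2$, $m_2<m_1$ and $m_1=m_2$. The first part gives $\sum_{m_1<m_2<n} m_1^{-s_1}m_2^{-\overline{s_1}}n^{-\rho}=\zeta_3(s_1,\overline{s_1},\rho)$. The second gives $\zeta_3(\overline{s_1},s_1,\rho)$. The diagonal gives $\sum_{m<n}m^{-2\sigma_1}n^{-\rho}=\zeta_2(2\sigma_1,\rho)$, since $m^{-s_1}m^{-\overline{s_1}}=m^{-2\sigma_1}$.

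There is one point to check, which I expect to be the main (albeit mild) obstacle. The $\zeta_3$ values here must equal the meromorphically continued functions, because $(s_1,\overline{s_1},\rho)$ need not lie in the standard absolute convergence region of $\zeta_3$; for example $\sigma_1\le 1/2$ is allowed. However, each of the three pieces is dominated by the majorant above, so the defining series converge absolutely at these points. Absolute convergence on an open set containing the relevant points gives holomorphy there, so the series coincides with the continuation. Concretely, the open set $\{\Re\rho>1,\ 2\Re s_1+\Re\rho>3\}$ with $\overline{s_1}$ replaced by an independent variable $s_1'$ satisfying $\Re s_1+\Re s_1'+\Re\rho>3$ is a connected region of absolute convergence. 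This region overlaps the classical region, so the identity theorem identifies the series with the continued function. The same argument applies to $\zeta_2(2\sigma_1,\rho)$. This proves \eqref{eq:intro-diagonal-MZV}.

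Finally, substituting $\rho=2\sigma_2$ turns the conditions into \eqref{eq:intro-diagonal-region}. Inserting \eqref{eq:intro-diagonal-MZV} into the mean-square formula \eqref{eq:intro-MT-form} (quoted from Matsumoto--Tsumura and Ikeda--Matsuoka--Nagata) gives the stated asymptotic. For integers $k\ge1$ and $\ell\ge2$ we have $\overline{k}=k$, so the two $\zeta_3$ terms coincide and the coefficient is $2\zeta_3(k,k,\ell)+\zeta_2(2k,\ell)$. Since $\ell\ge2$, these are convergent multiple zeta values. The path $(k,\ell/2+it)$ avoids the poles of $\zeta_2$: those lie on $s_2=1$ and on $s_1+s_2\in\{2,1,0,-2,-4,\dots\}$, and none of these sets contains a point with $t\ge2$ and real $s_1=k$. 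Hence the corollary holds with error $o_{k,\ell}(T)$.
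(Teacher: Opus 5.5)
Your argument is essentially the paper's: expand $|H_{n-1}^{(s_1)}|^2$ as a double sum over indices below $n$ and split according to $m<k$, $k<m$, $m=k$. Your majorant $\sum_n (H_{n-1}^{(\sigma_1)})^2 n^{-\Re\rho}$ usefully supplies the absolute-convergence justification that the paper leaves implicit.

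One correction concerns the paragraph you flag as the main obstacle. The points $(s_1,\overline{s_1},\rho)$ and $(\overline{s_1},s_1,\rho)$ always lie in the standard region of absolute convergence $\{\Re s_3>1,\ \Re(s_2+s_3)>2,\ \Re(s_1+s_2+s_3)>3\}$ of $\zeta_3$. This is because $\sigma_1+\Re\rho=\tfrac12(2\sigma_1+\Re\rho)+\tfrac12\Re\rho>2$; your example $\sigma_1\le 1/2$ simply forces $\Re\rho$ to be large. Similarly, $(2\sigma_1,\rho)$ lies in the region of absolute convergence of $\zeta_2$. So no continuation argument is needed at all.

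Moreover, the open set you propose in its place is not a region of absolute convergence, since it omits the condition $\Re s_1'+\Re\rho>2$. For example, $(s_1,s_1',\rho)=(10,-5,3/2)$ satisfies your inequalities, but $\sum_{m_1<m_2<n}m_1^{-10}m_2^{5}n^{-3/2}$ diverges. With the correct region, the rest of your proof goes through, including the pole check for $(k,\ell/2+it)$.
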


We investigate the cases in which one of the inequalities in \eqref{eq:intro-diagonal-region} becomes an equality, together with their common intersection $(\Re s_1,\sigma_2)=\left(1,1/2\right)$.
For a fixed $a\in\mathbb C$ with $\Re a>0$, define
\begin{align*}
 C_{+}(a)
 &:= |\zeta(1+a)|^2(2\gamma-1) +\sum_{n=1}^{\infty} \frac{  |H_{n-1}^{(1+a)}|^2-|\zeta(1+a)|^2 }{n},\\
 C_{-}(a)
 &:= \frac{2\gamma-1}{|a|^2} +\sum_{n=1}^{\infty}
 \frac{  n^{-2\Re a}|H_{n-1}^{(1-a)}|^2-|a|^{-2} }{n}.
\end{align*}
Both series are absolutely convergent (see Remarks \ref{rem:plus-renormalized-diagonal} and \ref{Remark4.7}).
For fixed $b\in\mathbb R\setminus\{0\}$,  we set
\begin{align*}
 &\mathcal D_b
 = (2\gamma-1)\left(|\zeta(1+ib)|^2+b^{-2}\right)
 +\sum_{n=1}^{\infty} \frac{  |H_{n-1}^{(1+ib)}|^2  -  \left|\zeta(1+ib)+\frac{i}{b}n^{-ib}\right|^2 }{n},\\
 &\mathcal E_b(X)
 := -2\operatorname{Re}\left\{  \frac{i \zeta(1+ib)^2}{b(1+ib)}e^{ibX} \right\}.
\end{align*}
The series occurring in the definition of $\mathcal D_b$ converges absolutely, as will be shown in the proof of Theorem~\ref{thm:intro-boundary-main}.
We also put
\begin{align*}
 &C_0
 := -2+4\gamma-3\gamma^2+\frac43\gamma^3 +2(1-\gamma)\gamma_1+\gamma_2 -\frac43\zeta(3),\\
 &P_0(X)
 := \frac13X^3+(2\gamma-1)X^2 +(3\gamma^2-4\gamma+2-2\gamma_1)X+C_0,
\end{align*}
where the Stieltjes constants are defined by
\begin{align}
   \zeta(1+w)
 = \frac1w+ \sum_{j=0}^{\infty} \frac{(-1)^j\gamma_j}{j!}w^j.
 \label{eq:Stieltjes-convention-prelim}
\end{align}

For $a\in\mathbb C$ with $\Re a>0$, put $T_a:=2+|\Im a|$.

\begin{theorem}\label{thm:intro-boundary-main}
Let $a\in\mathbb C$ be fixed with $\Re a>0$, and let
$b\in\mathbb R\setminus\{0\}$ be fixed. 
Then, as $T\to\infty$, the following formulas hold:
\begin{align*}
 &\int_{T_a}^{T} \left|  \zeta_2\left(1+a,\frac12+it\right) \right|^2dt
 = |\zeta(1+a)|^2T\log\frac{T}{2\pi} +C_{+}(a)T+o_a(T),\\
 &\int_2^T \left| \zeta_2\left(1+ib,\frac12+it\right) \right|^2dt
 = \left(|\zeta(1+ib)|^2+b^{-2}\right)T\log\frac{T}{2\pi} +\mathcal D_bT +T\mathcal E_b\left(\log\frac{T}{2\pi}\right) +o_b(T),\\
 &\int_2^T \left| \zeta_2\left(1,\frac12+it\right) \right|^2dt
 = T P_0\left(\log\frac{T}{2\pi}\right) +o(T), \\
 &\int_{T_a}^{T} \left| \zeta_2\left(1-a,\frac12+a+it\right) \right|^2dt
 = \frac{T}{|a|^2}\log\frac{T}{2\pi} +C_{-}(a)T+o_a(T).  
\end{align*}
\end{theorem}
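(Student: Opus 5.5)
We follow the classical route to \eqref{eq:intro-zeta-critical}: approximate $\zeta_2(s_1,\frac12+it)$ by a Dirichlet polynomial in $n$ with coefficients $c_n=H_{n-1}^{(s_1)}$, and compute its mean square with a Montgomery--Vaughan type mean value theorem. From the approximate formula of Ikeda, Matsuoka and Nagata (the Atkinson-type decomposition of $\zeta_2$), for $t\asymp T$ and $x=t/(2\pi)$ we expect
\[
\zeta_2(s_1,s_2)=\sum_{n\le x}\frac{H_{n-1}^{(s_1)}}{n^{s_2}}
+\chi\text{-type dual sum}+\text{(term involving }\zeta(s_1+s_2-1)/(s_2-1)\text{ etc.)}+\text{error}.
\]
We would work with an admissible version of this formula. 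The dual terms have the same shape as in the Hardy--Littlewood approximate functional equation for $\zeta(s_2)$, weighted by $\zeta(s_1)$ or by $\zeta(s_1+s_2-1)$-type factors. We would integrate over dyadic blocks $[T/2,T]$ and sum, so that $\log(t/2\pi)$ appears naturally; the lower limit $T_a$ only keeps the path away from poles.

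\textbf{Step 1: the diagonal.} The mean-value theorem $\int|\sum a_n n^{-it}|^2dt=\sum|a_n|^2(T+O(n))$ for a sum of length $\asymp t$, combined with the dual sum, gives the same structure as in the proof of \eqref{eq:intro-zeta-critical}. The main term is $\int_{T_a}^T\sum_{n\le t/2\pi}|c_n|^2/n\,dt$ plus an identical contribution from the reflected sum. To evaluate it, write $|c_n|^2=L+(|c_n|^2-L)$, where $L$ is the limiting value:
\begin{itemize}
\item $L=|\zeta(1+a)|^2$ in the first case;
\item $L=|a|^{-2}$, after normalising by $n^{-2\Re a}$, in the fourth case, since $H_{n-1}^{(1-a)}\sim n^{a}/a$;
\item in the $b\ne0$ case, the nonconstant approximant $|\zeta(1+ib)+\frac ibn^{-ib}|^2$, which comes from $H_{n-1}^{(1+ib)}=\zeta(1+ib)+\frac{i}{b}n^{-ib}+O(1/n)$.
\end{itemize}
The remainder differences are $O(n^{-\delta})$, so their sums converge absolutely and produce $C_\pm(a)$ and $\mathcal D_b$. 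The constant piece, summed with the $\gamma$ coming from $\sum_{n\le x}1/n=\log x+\gamma+O(1/x)$ and integrated over $t$, gives $L(T\log\frac{T}{2\pi}+(2\gamma-1)T)$. In the $b\neq0$ case the cross term $2\Re\{\zeta(1+ib)\overline{(i/b)}n^{ib}\}$ contributes
\[
\sum_{n\le x}n^{-1+ib}=\zeta(1-ib)-\tfrac{i}{b}x^{ib}+\cdots .
\]
The oscillating $x^{ib}$ part, after integration in $t$, produces $\int x^{ib}dt\approx 2\pi x^{1+ib}/(1+ib)$. This is precisely $T\mathcal E_b(\log\frac T{2\pi})$, and the constant part merges into $\mathcal D_b$.

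\textbf{Step 2: the corner $s_1=1$.} Here $c_n=H_{n-1}=\log n+\gamma-\frac1{2n}+O(n^{-2})$. We need $\sum_{n\le x}H_{n-1}^2/n$ up to $o(1)$, which we would obtain by Euler--Maclaurin, or more cleanly as the constant term of the Laurent expansion at $\rho=1$ of $\zeta_2^{[2]}(1,\rho)$. Theorem \ref{thm:intro-diagonal-MZV} (by continuation) expresses this function as $2\zeta_3(1,1,\rho)+\zeta_2(2,\rho)$, whose polar parts at $\rho=1$ involve $\gamma,\gamma_1,\gamma_2,\zeta(3)$. Perron's formula, or simply partial summation with the known expansion, then gives a cubic polynomial in $\log x$. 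Integrating $\int(\log\frac t{2\pi})^j dt$ produces $P_0$, and the specific value of $C_0$ is a bookkeeping computation.

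\textbf{Step 3: off-diagonal and cross terms.} We would show that the off-diagonal terms, the cross terms between the main sum and the dual sum, and the cross terms with the secondary pieces are all $o(T)$. The mean-value theorem bounds the off-diagonal part by $\sum n|c_n|^2/n\ll T(\log T)^{O(1)}$ in the crude form, which is too large. So we need the refined treatment used for \eqref{eq:intro-zeta-critical}: smooth or dyadic cutoffs and a stationary-phase analysis of the $n^{-it}$ against $\chi$ cross terms, as in Atkinson's/Good's argument. These cross terms are controlled since the coefficients are $\ll n^{\epsilon}$, or bounded, except in the fourth case, where the normalisation makes them effectively bounded. This error analysis is the main obstacle. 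Reaching $o(T)$, rather than $O(T)$, at the corner needs savings of $(\log T)^3$ over trivial bounds. For that we would rely on the sharper remainder estimates of Banerjee--Minamide--Tanigawa for the decomposition of $\zeta_2$, and on the Montgomery--Vaughan inequality with its $\sum n|a_n|^2$ term, treating the secondary terms such as $\zeta(s_1)\zeta(s_2)$ and $\zeta(s_1+s_2-1)/(s_2-1)$ separately via known mean values of $\zeta$.
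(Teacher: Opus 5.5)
Your reading of $C_\pm(a)$ and $\mathcal D_b$ as finite parts $\sum_n(|c_n|^2-L)/n$ is right. However, the analytic core of the plan is missing, and the part you do set up cannot work as stated. You keep only the diagonal of a single Dirichlet polynomial of length $x=t/(2\pi)$ and propose to show that everything else is $o(T)$. That is Step~3, which you leave as ``the main obstacle''. For a sum of length $\asymp t$, however, the discarded terms are not $o(T)$; they contribute to the main terms. Already for $\zeta(\frac12+it)$ the diagonal gives $\int_2^T\sum_{n\le t/2\pi}n^{-1}\,dt=T\log\frac{T}{2\pi}+(\gamma-1)T+O(\log T)$, so $\gamma T$ is missing. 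At the corner, integrating $\sum_{n\le t/2\pi}H_{n-1}^2/n=\frac13(\log\frac{t}{2\pi}+\gamma)^3-\frac43\zeta(3)+o(1)$ gives coefficient $\gamma-1$ for $TX^2$ instead of the $2\gamma-1$ in $P_0$. That is an error of order $T(\log T)^2$. Adding ``an identical contribution from the reflected sum'' does not repair this. With $x=t/2\pi$ the dual sum has length $1$, and doubling the diagonal would double the leading term. In a symmetric approximate functional equation ($x=y=\sqrt{t/2\pi}$), the dual coefficients are not $H_{n-1}^{(s_1)}$. They are $\chi$-twisted expressions such as $\zeta(1+ib)+\frac ib(\frac t{2\pi})^{-ib}n^{ib}$, or $\log\frac{t}{2\pi n}+\gamma$ at the corner, and these are where the correct lower-order terms come from.

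The $b\neq0$ case shows this concretely. Your $x^{ib}$ term yields $-\frac{2}{b^2}\Re\{\frac{\zeta(1+ib)}{1+ib}Te^{ibX}\}$, whereas $T\mathcal E_b(X)=-\frac2b\Re\{\frac{i\zeta(1+ib)^2}{1+ib}Te^{ibX}\}$. These agree only if $\zeta(1+ib)=1/(ib)$. In a symmetric approximate functional equation the $x^{ib}$ terms from the two diagonals cancel. The term $\mathcal E_b$ instead comes from the dual diagonal, where $\chi(s_t+ib)/\chi(s_t)\approx(t/2\pi)^{-ib}$ multiplies $\sum_{n\le y}n^{-1-ib}\to\zeta(1+ib)$.

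The missing idea is the one the paper uses to avoid approximating $\zeta_2$ at all. The Euler--Maclaurin decompositions of Proposition~\ref{prop:boundary-decompositions} write $\zeta_2(s_1,s)$ as a combination of $\zeta(s)$, $\zeta(s+a)$, $\zeta(s+ib)$ or $-\zeta'(s)+\gamma\zeta(s)$, plus a series $B(s)$ with coefficients $\ll n^{-1}$. The products of zeta-functions are then handled by Ingham's shifted second moment, which already contains the term $\zeta(1+ib)(T/2\pi)^{ib}T/(1+ib)$. At the corner they are handled by Bettin's uniform version, differentiated via Cauchy's formula. The mixed terms follow from Montgomery--Vaughan with off-diagonal error $\ll T^{1/2}\log T$, because $\sum_n|d_n|^2<\infty$. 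So no new off-diagonal analysis is needed.
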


\begin{remark}
Each of the definitions of $C_{+}(a)$, $C_{-}(a)$, and $\mathcal D_b$ contains an absolutely convergent infinite series. 
These series can be recovered from the constant terms in the Laurent expansions of the corresponding diagonal Dirichlet series 
$\zeta_2^{[2]}(s_1,\rho)=\zeta_3(s_1,\overline{s_1},\rho)+\zeta_3(\overline{s_1},s_1,\rho)+\zeta_2(2\Re s_1,\rho)$ 
at their boundary points. 
More precisely, the relevant point is $\rho=1$ for $C_{+}(a)$ and $\mathcal D_b$, whereas for $C_{-}(a)$ it is $\rho=1+2\Re a$, or equivalently $\rho=1$ after considering $\zeta_2^{[2]}(1-a,\rho+2\Re a)$.
They may also be viewed as suitably normalized harmonic finite parts of the corresponding divergent diagonal sums. 
The associated Laurent constants may therefore be regarded as Euler-type constants attached to these diagonal Dirichlet series. 
See Remarks~\ref{Remark4.2}, \ref{Remark4.3}, \ref{Remark4.5}, and \ref{Remark4.7} for further details.
\end{remark}

At positive integral points and at the corner, the lower-order coefficients in Theorem~\ref{thm:intro-boundary-main} can also be interpreted arithmetically. 
Let $\zeta^{*}(k_1,\ldots,k_r)$ denote the constant term of the harmonic regularization of the multiple zeta value with index $(k_1,\ldots,k_r)$. 

For integral parameters, the finite parts of the divergent diagonal sums are precisely harmonic regularized multiple zeta values.

\begin{theorem}\label{thm:intro-regularized-main}
For every integer $k\geq2$, one has
\begin{align*}
 C_{+}(k-1)
 &= \zeta(k)^2(2\gamma-1) + 2\zeta^{*}(k,k,1) + \zeta^{*}(2k,1). 
\end{align*}
Moreover,  using the relation $2\zeta^{*}(1,1,1)+\zeta^{*}(2,1)=-4\zeta(3)/3$, we have
\begin{align*}
 C_0
 &= -2+4\gamma-3\gamma^2+\frac43\gamma^3 +2(1-\gamma)\gamma_1+\gamma_2 +2\zeta^{*}(1,1,1)+\zeta^{*}(2,1).
\end{align*}
\end{theorem}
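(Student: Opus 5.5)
Setting $a=k-1$ with $k\geq2$, we have $H_{n-1}^{(k)}$ real, so the summand of $C_{+}(k-1)$ is $\bigl((H_{n-1}^{(k)})^2-\zeta(k)^2\bigr)/n$. The plan is to read $\sum_n (H_{n-1}^{(k)})^2/n$ as a divergent diagonal sum and compute its harmonic finite part in two ways. We work with truncations at $N$. For integers $k\geq2$ there is the exact identity
\[
(H_{n-1}^{(k)})^2 = 2\sum_{1\le m_1<m_2<n} \frac{1}{m_1^km_2^k}+\sum_{m<n}\frac{1}{m^{2k}},
\]
the finite analogue of Theorem~\ref{thm:intro-diagonal-MZV}. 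It gives
\[
S_N:=\sum_{n=1}^{N}\frac{(H_{n-1}^{(k)})^2}{n} = 2\zeta_N(k,k,1)+\zeta_N(2k,1),
\]
where $\zeta_N$ denotes the truncated multiple harmonic sum with largest index at most $N$.

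By definition of harmonic regularization, each truncated sum has an asymptotic expansion
\[
\zeta_N(\mathbf k,1)=\zeta^{*}(\mathbf k,1)+\zeta(\mathbf k)\bigl(\log N+\gamma\bigr)+o(1)
\]
for admissible $\mathbf k$ with a single trailing $1$. This follows from the stuffle relation $\zeta_N(\mathbf k)\zeta_N(1)=\zeta_N(\mathbf k,1)+(\text{convergent terms})$, together with $\zeta_N(1)=\log N+\gamma+o(1)$ and the fact that $\zeta^*$ is the polynomial constant term in $T=\zeta_N(1)$. Using $2\zeta(k,k)+\zeta(2k)=\zeta(k)^2$, we obtain
\[
S_N=\zeta(k)^2(\log N+\gamma)+2\zeta^{*}(k,k,1)+\zeta^{*}(2k,1)+o(1).
\]
On the other hand,
\[
S_N=\zeta(k)^2H_N^{(1)}+\sum_{n\le N}\frac{(H_{n-1}^{(k)})^2-\zeta(k)^2}{n},
\]
and the last sum converges absolutely by Remark~\ref{rem:plus-renormalized-diagonal}. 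Comparing the two expansions gives
\[
\sum_{n\geq1}\frac{(H_{n-1}^{(k)})^2-\zeta(k)^2}{n}=2\zeta^{*}(k,k,1)+\zeta^{*}(2k,1).
\]
Adding $\zeta(k)^2(2\gamma-1)$ yields the formula for $C_{+}(k-1)$.

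For $C_0$, only the relation $2\zeta^*(1,1,1)+\zeta^*(2,1)=-\tfrac43\zeta(3)$ needs proof, since substituting it into the definition of $C_0$ is immediate. Since $\zeta^*(2,1)=\zeta(2,1)=\zeta(3)$ (Euler), we need $\zeta^*(1,1,1)=-\tfrac{7}{6}\zeta(3)$. To get this, set $p_j=\zeta_N(j)$ for $j=1,2,3$. Then $e_3=\zeta_N(1,1,1)=(p_1^3-3p_1p_2+2p_3)/6$, and in the limit $p_2\to\zeta(2)$, $p_3\to\zeta(3)$ with $p_1=T$, its constant term is $\tfrac13\zeta(3)$. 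This is a trap, because $\zeta^*(1,1,1)$ is not that constant term. In increasing-index convention, $\zeta_N(2,1)$ (largest index weight $1$) diverges and is not $\zeta(3)$; it is $\zeta_N(1,2)=\sum_{m<n} 1/(mn^2)$ that converges to $\zeta(2,1)$ in the paper's convention. I would therefore carefully recompute with the paper's ordering. The stuffle gives $\zeta_N(2)\zeta_N(1)=\zeta_N(2,1)+\zeta_N(1,2)+\zeta_N(3)$, so
\[
\zeta^*(2,1)=\zeta(2)\cdot 0-\zeta(1,2)-\zeta(3)=-2\zeta(3).
\]
Here $\zeta(1,2)$ in increasing convention is the classical $\zeta(2,1)=\zeta(3)$, and I use $T\mapsto0$ normalization for $\zeta^*$, consistent with $\zeta_N(1)=\log N+\gamma$ being absorbed separately as above. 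Combined with $\zeta^*(1,1,1)=\zeta(3)/3$ from the power-sum formula, this gives $2\zeta^*(1,1,1)+\zeta^*(2,1)=\tfrac23\zeta(3)-2\zeta(3)=-\tfrac43\zeta(3)$, as claimed.

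The main obstacle is bookkeeping conventions: the ordering of indices and the normalization $T=0$ versus $T=\log N+\gamma$. I would fix $\zeta^*$ by requiring $\zeta_N(\mathbf k)=\zeta^*_{T}(\mathbf k)|_{T=\zeta_N(1)}+o(1)$ with $\zeta^*(\mathbf k)$ the value at $T=0$. I would then check the first part's expansion against this convention, which is where the $\gamma$ from $H_N$ combines with the $-1$ in $2\gamma-1$.
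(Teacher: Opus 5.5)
Your proposal is correct and follows the same route as the paper. For $k\geq2$ you use the finite identity $S_N=2\zeta_N(k,k,1)+\zeta_N(2k,1)$, the absolute convergence of the renormalized series, and identify the limit of $S_N-\zeta(k)^2H_N$ with the constant term of the harmonic regularization; your stuffle argument only makes explicit why $2\zeta^{*}(k,k,1;Y)+\zeta^{*}(2k,1;Y)-\zeta(k)^2Y$ is constant. For the corner you derive the values $\zeta^{*}(1,1,1)=\zeta(3)/3$ and $\zeta^{*}(2,1)=-2\zeta(3)$, which the paper simply quotes as known.

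You should delete the false start in the corner computation. In the increasing-index convention $\zeta^{*}(2,1)\neq\zeta(3)$, and your sentence saying $\zeta^{*}(1,1,1)$ is not the constant term $\tfrac13\zeta(3)$ contradicts the correct value you then use.
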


It is worth noting the change of sign at the boundary. 
For integers $k\geq2$ and $\ell\geq2$, the convergent multiple zeta values $\zeta_3(k,k,\ell)$ and $\zeta_2(2k,\ell)$ are positive real numbers.
In contrast, their harmonic regularized boundary counterparts $\zeta^{*}(k,k,1)$ and $\zeta^{*}(2k,1)$ are negative real numbers.
Thus harmonic regularization does not, in general, preserve positivity.

\begin{remark}\label{rem:integral-half-integral-mean-square}
Combining \eqref{eq:intro-MT-form} with Theorem~\ref{thm:intro-diagonal-MZV}, we find that, for integers $k\geq1$ and $\ell\geq2$,
\begin{align}
 \int_2^T \left| \zeta_2\left(k,\frac{\ell}{2}+it\right) \right|^2dt
 &= \left\{ 2\zeta_3(k,k,\ell)+\zeta_2(2k,\ell) \right\}T
 +o_{k,\ell}(T).
 \label{eq:intro-integral-half-integral-mean-square}
\end{align}
Since $\ell\geq2$, all the multiple zeta values on the right-hand side are convergent.

For $k\geq2$, the point $(s_1,\sigma_2)=(k,1/2)$ formally corresponds to setting $\ell=1$ in \eqref{eq:intro-integral-half-integral-mean-square}, but the resulting multiple zeta series $2\zeta_3(k,k,1)+\zeta_2(2k,1)$ are divergent.
Theorems~\ref{thm:intro-boundary-main} and \ref{thm:intro-regularized-main} express the lower-order coefficient in terms of their harmonic regularizations and give
\begin{align*}
 &\int_2^T \left| \zeta_2\left(k,\frac12+it\right) \right|^2dt\\
 &\qquad= \zeta(k)^2T\log\frac{T}{2\pi}
 +\left\{ \zeta(k)^2(2\gamma-1) +2\zeta^{*}(k,k,1) +\zeta^{*}(2k,1) \right\}T
 +o_k(T).
\end{align*}
Here $\zeta(k)^2(2\gamma-1)$ is obtained by multiplying the lower-order coefficient in the classical mean-square formula for $\zeta(1/2+it)$ by $\zeta(k)^2$, whereas $2\zeta^{*}(k,k,1)+\zeta^{*}(2k,1)$ is the harmonic finite part of the divergent diagonal multiple zeta sums formally represented by $2\zeta_3(k,k,1)+\zeta_2(2k,1)$.

When $k=1$, the two boundary lines meet at the corner $(s_1,\sigma_2)=\left(1,1/2\right)$.
In this case, the mean-square formula takes the form
\begin{align*}
 \int_2^T \left| \zeta_2\left(1,\frac12+it\right) \right|^2dt
 &= T\left\{ \frac13X^3+(2\gamma-1)X^2 +(3\gamma^2-4\gamma+2-2\gamma_1)X+C_0 \right\}
 +o(T),
\end{align*}
where $X=\log\frac{T}{2\pi}$.
The formal specialization $k=\ell=1$ of the diagonal multiple-zeta decomposition gives  $2\zeta_3(1,1,1)+\zeta_2(2,1)$.
Its harmonic finite part is $2\zeta^{*}(1,1,1)+\zeta^{*}(2,1) = -4\zeta(3)/3$, and Theorem~\ref{thm:intro-regularized-main} shows that it occurs in the constant term
\begin{align*}
 C_0
 = -2+4\gamma-3\gamma^2+\frac43\gamma^3+2(1-\gamma)\gamma_1+\gamma_2 +2\zeta^{*}(1,1,1)+\zeta^{*}(2,1).
\end{align*}
\end{remark}

\begin{remark}\label{rem:C-zero-multiple-Stieltjes}
Matsumoto, Onozuka and Wakabayashi \cite{MatsumotoOnozukaWakabayashi} defined multiple Stieltjes constants through the Laurent expansion of the Euler--Zagier multiple zeta-function at $(1,\ldots,1)$. 
In their normalization, setting $s_1=s_2=1$ and $s_3=1+u$ gives
\begin{align}
\zeta_3(1,1,1+u)
&= \frac1{u^3} +\frac{\gamma_{(0,0,0)}}{u^2} +\frac{\gamma_{(0,0,1)}}u +\gamma_{(0,0,2)}+O(u).
\label{eq:triple-Stieltjes-expansion}
\end{align}
Their double Euler constant formula also gives
\begin{align}
\zeta_2(2,1+u)
&=\frac{\zeta(2)}u+\gamma\zeta(2)-2\zeta(3)+O(u).
\label{eq:double-Euler-expansion-at-two}
\end{align}

On the other hand, by partial summation,
\[
 \zeta_2^{[2]}(1,1+u)
 = u\int_1^\infty \left(\sum_{n\leq x}\frac{H_{n-1}^2}{n}\right) x^{-1-u}dx
 \qquad(\Re u>0),
\]
and inserting \eqref{eq:corner-H-square-asymptotic} below, together with $u\int_1^\infty(\log x)^kx^{-1-u}dx=k!\,u^{-k}$ $(\Re u>0)$, gives
\[
 \zeta_2^{[2]}(1,1+u)
 = \frac{2}{u^3}+\frac{2\gamma}{u^2}+\frac{\gamma^2}{u}
 +\frac{\gamma^3}{3}-\frac43\zeta(3)+o(1)
 \qquad(u\to0+).
\]
By Theorem~\ref{thm:intro-diagonal-MZV} and analytic continuation,
$\zeta_2^{[2]}(1,\rho)=2\zeta_3(1,1,\rho)+\zeta_2(2,\rho)$,
so $\zeta_2^{[2]}(1,\rho)$ is meromorphic at $\rho=1$ with a pole of order at most three.
Hence the asymptotic above determines its Laurent coefficients there.
Comparing the constant terms with \eqref{eq:triple-Stieltjes-expansion} and
\eqref{eq:double-Euler-expansion-at-two}, we obtain
\begin{align*}
2\gamma_{(0,0,2)} + \gamma\zeta(2) - 2\zeta(3) &= \frac{\gamma^3}{3}-\frac43\zeta(3),
\end{align*}
or equivalently,
\begin{align*}
\gamma_{(0,0,2)} &= \frac{\gamma^3}{6}-\frac{\gamma\zeta(2)}2+\frac{\zeta(3)}3.
\end{align*}
Consequently, the constant $C_0$ may also be written as
\begin{align*}
C_0 =-2+4\gamma-3\gamma^2+2\gamma^3+2(1-\gamma)\gamma_1+\gamma_2-2\gamma\zeta(2)-4\gamma_{(0,0,2)}.
\end{align*}
Thus the constant term in the mean-square formula at the corner can also be expressed in terms of the multiple Stieltjes constants introduced in the author's previous joint work \cite{MatsumotoOnozukaWakabayashi}.
\end{remark}

The paper is organized as follows. 
In Section~\ref{sec:analytic-preliminaries}, we derive the required Euler--Maclaurin decompositions and collect mean-value formulas for the Riemann zeta-function and absolutely convergent Dirichlet series. 
In Section~\ref{sec:diagonal-coefficient}, we prove the multiple-zeta decomposition of $\zeta_2^{[2]}(s_1,2\sigma_2)$ stated in Theorem~\ref{thm:intro-diagonal-MZV}. 
In Section~\ref{sec:boundary-mean-square}, we establish the mean-square formulas for the three boundary cases stated in Theorem~\ref{thm:intro-boundary-main}, treating the middle case separately according as $b\neq0$ or $b=0$. 
Finally, in Section~\ref{sec:regularized-MZV}, we interpret the relevant lower-order coefficients at positive integral points of the first case and at the corner in terms of harmonic regularized multiple zeta values.

\section{Analytic preliminaries}\label{sec:analytic-preliminaries}

In this section, we collect the analytic tools used in the proofs of our mean-square formulas. We first recall some consequences of the Euler--Maclaurin formula and then give the mean-value formulas for the Riemann zeta-function and absolutely convergent Dirichlet series that will be needed later.

It is known that the Euler--Zagier multiple zeta-function admits a meromorphic continuation to the whole complex space. 
The meromorphic continuation was established by Zhao \cite{Zhao} and, independently, by Akiyama, Egami and Tanigawa \cite{AkiyamaEgamiTanigawa}. 
In particular, Akiyama, Egami and Tanigawa obtained the continuation by applying the Euler--Maclaurin summation formula recursively.

The decompositions used below should not be regarded as new meromorphic continuation formulas. They are convenient specializations of the Euler--Maclaurin method to the three cases considered in this paper. 
Their advantage for our purpose is that they separate the relevant Riemann zeta-functions from Dirichlet series which converge absolutely on the critical line.

Throughout the paper, we write $s_t:=1/2+it$ and $X:=\log(T/(2\pi))$.

\subsection{Euler--Maclaurin decompositions}

In this subsection, we record four Euler--Maclaurin decompositions.  
These formulas isolate the relevant Riemann zeta-functions from Dirichlet series which converge absolutely on the critical line.

For $w\in\mathbb{C}$, initially in the region of absolute convergence,
\begin{align}
 \zeta_2(w,s)
 &= \sum_{n=1}^{\infty} \frac{H_{n-1}^{(w)}}{n^s}. 
 \label{eq:double-zeta-harmonic}
\end{align}

\begin{lemma}\label{lem:Euler-Maclaurin-harmonic-expansions}
Let $a\in\mathbb C$ be fixed with $\Re a>0$, and let $b\in\mathbb R\setminus\{0\}$ be fixed. 
We have
\begin{align}
&H_{n-1}^{(1+a)}
= \zeta(1+a)-\frac{n^{-a}}a+r_a^{+}(n),
\label{eq:H-plus-AET}\\
&H_{n-1}^{(1+ib)}
= \zeta(1+ib)-\frac{n^{-ib}}{ib}+r_b(n),
\label{eq:H-imaginary-AET}\\
&H_{n-1}
= \log n+\gamma+r_0(n),
\label{eq:H-zero-AET}\\
&n^{-a}H_{n-1}^{(1-a)}
= \frac1a+\zeta(1-a)n^{-a}+r_a^{-}(n),
\label{eq:H-minus-AET}
\end{align}
where
\begin{align}
&r_a^{+}(n)
= -\frac12n^{-1-a}+O_a\left(n^{-2-\Re a}\right),
\label{eq:r-plus-AET}\\
&r_b(n)
= -\frac12n^{-1-ib}+O_b(n^{-2}),
\label{eq:r-imaginary-AET}\\
&r_0(n)
= -\frac1{2n}+O(n^{-2}),
\label{eq:r-zero-AET}\\
&r_a^{-}(n)
= -\frac1{2n}+O_a(n^{-2}).
\label{eq:r-minus-AET}
\end{align}
\end{lemma}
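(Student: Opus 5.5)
The four expansions all come from one Euler--Maclaurin computation. We apply it, for a general exponent $w$, to the tail
\[
 \sum_{m=n}^{\infty} m^{-w}
\]
in the range where this converges. We then pass to the remaining cases by analytic continuation in $w$, or by a direct limiting argument.

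\textbf{The general identity.} Let $w\in\C$ with $\Re w>0$ and $w\neq1$, and let $n\geq1$. First-order Euler--Maclaurin on $[1,N]$ with $\psi(x)=\{x\}-1/2$, together with the standard continuation of $\zeta(w)$ in the half-plane $\Re w>0$, gives
\[
 H_{n-1}^{(w)}=\zeta(w)-\frac{n^{1-w}}{1-w}-\frac12 n^{-w}+w\int_n^\infty \psi(x)x^{-w-1}dx .
\]
Carrying the expansion one step further with $B_2$ and the periodic function $\psi_2$ (which has mean zero and is bounded) gives
\[
 w\int_n^\infty\psi(x)x^{-w-1}dx=-\frac{w}{12}n^{-w-1}+O_w\bigl(n^{-\Re w-2}\bigr)=O_w\bigl(n^{-\Re w-1}\bigr).
\]

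\textbf{Specializations.} Setting $w=1+a$ gives \eqref{eq:H-plus-AET}, since $n^{1-w}/(1-w)=-n^{-a}/a$. The error term is then
\[
 r_a^+(n)=-\tfrac12 n^{-1-a}+O_a\bigl(n^{-2-\Re a}\bigr).
\]
Setting $w=1+ib$ gives \eqref{eq:H-imaginary-AET} and \eqref{eq:r-imaginary-AET} in the same way, with error $O_b(n^{-2})$. For $w=1-a$ with $0<\Re a<1$, we multiply the identity by $n^{-a}$. This gives
\[
 n^{-a}H_{n-1}^{(1-a)}=\zeta(1-a)n^{-a}+\frac1a-\frac12 n^{-1}+O_a(n^{-2}),
\]
which is \eqref{eq:H-minus-AET}–\eqref{eq:r-minus-AET}, because $n^{-a}\cdot n^{a-1}=n^{-1}$ and $n^{-a}\cdot n^{a-2}=n^{-2}$.

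When $\Re a\geq1$, so that $\Re(1-a)\leq 0$, we instead use the finite Euler--Maclaurin formula with enough Bernoulli terms for the remainder integral to converge. The constant is then identified with $\zeta(1-a)$ through the standard continuation of $\zeta$. The identity keeps exactly the same shape; the only change is that the extra Bernoulli terms $n^{a-1-j}$ with $j\geq2$ all contribute $O_a(n^{-2})$ after multiplication by $n^{-a}$.

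\textbf{The case $w=1$.} Here Euler--Maclaurin gives directly
\[
 H_{n-1}=\log n+\gamma-\frac1{2n}+O(n^{-2}).
\]
This follows from the classical expansion $H_n=\log n+\gamma+\tfrac1{2n}+O(n^{-2})$ together with $H_{n-1}=H_n-1/n$. Alternatively, one can take the limit $w\to1$ in the general identity, using $\zeta(w)-1/(w-1)\to\gamma$.

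\textbf{Main obstacle.} The only step needing care is uniformity of the implied constants and the validity range for $1-a$ when $\Re a$ is large. This is handled by choosing the number of Bernoulli terms in terms of $\Re a$. All constants depend only on the fixed parameter $a$ or $b$, which is all the lemma claims.
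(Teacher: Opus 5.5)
Your proof follows the paper's route. The paper quotes, from Akiyama--Egami--Tanigawa, the expansion $H_{n-1}^{(w)}=\zeta(w)+\frac{n^{1-w}}{1-w}-\frac12n^{-w}+O_w(n^{-\Re w-1})$ for every fixed $w\neq1$. It then specializes to $w=1+a$, $1+ib$, $1-a$, multiplies the last case by $n^{-a}$, and uses the classical expansion of $H_{n-1}$ for $w=1$. Your sketch of the Euler--Maclaurin derivation, including the extra Bernoulli terms needed when $\Re a\geq1$, just fills in what the paper cites.

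\textbf{Sign error in your general identity.} The displayed identity has the wrong sign on its main correction term. It should read $+\frac{n^{1-w}}{1-w}$, equivalently $-\frac{n^{1-w}}{w-1}$. You can check this in two ways.
\begin{itemize}
\item For $w=2$, your version gives $H_{n-1}^{(2)}=\zeta(2)+\frac1n+\cdots$. But the tail $\sum_{m\geq n}m^{-2}\sim\frac1n$ forces $\zeta(2)-\frac1n+\cdots$.
\item At $n=1$, the correct sign reproduces $\zeta(w)=\frac1{w-1}+\frac12-w\int_1^\infty\psi(x)x^{-w-1}\,dx$. Yours yields a pole with residue $-1$.
\end{itemize}
Your remainder term $+w\int_n^\infty\psi(x)x^{-w-1}\,dx$ already has the correct sign.

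Your specializations, $-n^{-a}/a$ in the first expansion and $+1/a$ in the last, silently use the correct sign. So only the display needs fixing.

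A minor point on indexing: for $w=1-a$, the additional Bernoulli terms are $n^{a-j}$ with $j\geq2$ (starting with the $B_2$ term $n^{a-2}$), not $n^{a-1-j}$. This changes nothing, since all of them are $O_a(n^{-2})$ after multiplication by $n^{-a}$.
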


\begin{proof}
The Euler--Maclaurin formula, in the form used in \cite{AkiyamaEgamiTanigawa}, gives, for every fixed $w\in\mathbb C\setminus\{1\}$,
\begin{align*}
H_{n-1}^{(w)}
&=\zeta(w)+\frac{n^{1-w}}{1-w}-\frac12n^{-w}+O_w\left(n^{-\Re w-1}\right).
\end{align*}
Applying this formula with $w=1+a$, $w=1+ib$, and $w=1-a$, and multiplying the last result by $n^{-a}$, gives \eqref{eq:H-plus-AET}, \eqref{eq:H-imaginary-AET}, and \eqref{eq:H-minus-AET}, together with their stated remainder estimates. 
Finally, the classical expansion $H_{n-1}=\log n+\gamma-(2n)^{-1}+O(n^{-2})$ gives \eqref{eq:H-zero-AET}.
\end{proof}

Define
\begin{align*}
 &B_a^{+}(s)
 := \sum_{n=1}^{\infty}\frac{r_a^{+}(n)}{n^s}, 
 &B_b(s)
 := \sum_{n=1}^{\infty}\frac{r_b(n)}{n^s}, \\
 &B_0(s)
 := \sum_{n=1}^{\infty}\frac{r_0(n)}{n^s}, 
 &B_a^{-}(s)
 := \sum_{n=1}^{\infty}\frac{r_a^{-}(n)}{n^s}.
\end{align*}
It follows from the estimates above that $B_a^{+}(s)$ converges absolutely for $\Re s>-\Re a$ while $B_b(s)$, $B_0(s)$, and $B_a^{-}(s)$ converge absolutely for $\Re s>0$.

\begin{proposition}\label{prop:boundary-decompositions}
Let $a\in\mathbb C$ and $b\in\mathbb R$ be fixed, with $\Re a>0$ and $b\neq0$. 
Away from their polar loci, \eqref{eq:decomposition-plus} below holds for $\Re s>-\Re a$, while \eqref{eq:decomposition-imaginary}--\eqref{eq:decomposition-minus} hold for $\Re s>0$:
\begin{align}
 &\zeta_2(1+a,s)
 = \zeta(1+a)\zeta(s) -\frac1a\zeta(s+a) +B_a^{+}(s), 
 \label{eq:decomposition-plus}\\
 &\zeta_2(1+ib,s)
 = \zeta(1+ib)\zeta(s) -\frac1{ib}\zeta(s+ib) +B_b(s), 
 \label{eq:decomposition-imaginary}\\
 &\zeta_2(1,s)
 = -\zeta'(s)+\gamma\zeta(s)+B_0(s), 
 \label{eq:decomposition-zero}\\
 &\zeta_2(1-a,s+a)
 = \frac1a\zeta(s) +\zeta(1-a)\zeta(s+a) +B_a^{-}(s). 
 \label{eq:decomposition-minus}
\end{align}
\end{proposition}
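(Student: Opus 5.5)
The plan is to verify each identity first in a region of absolute convergence, and then extend it by analytic continuation. Take \eqref{eq:decomposition-plus} as the model case. For $\Re s$ large, specifically $\Re s>1$ together with $\Re(1+a)+\Re s>2$, the representation \eqref{eq:double-zeta-harmonic} holds with $w=1+a$. We substitute \eqref{eq:H-plus-AET} termwise and split the sum into three pieces:
\[
\sum_{n\ge1}\frac{\zeta(1+a)}{n^s}-\frac1a\sum_{n\ge1}\frac{1}{n^{s+a}}+\sum_{n\ge1}\frac{r_a^{+}(n)}{n^s}.
\]
Each piece converges absolutely for $\Re s>1$, since $\Re a>0$. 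The three pieces equal $\zeta(1+a)\zeta(s)$, $-\zeta(s+a)/a$ and $B_a^{+}(s)$ respectively.

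The same computation works for \eqref{eq:decomposition-imaginary}, using \eqref{eq:H-imaginary-AET}. For \eqref{eq:decomposition-zero}, we use \eqref{eq:H-zero-AET} together with $\sum_n (\log n)\, n^{-s}=-\zeta'(s)$. For \eqref{eq:decomposition-minus}, we write
\[
\zeta_2(1-a,s+a)=\sum_n \frac{n^{-a}H_{n-1}^{(1-a)}}{n^{s}},
\]
which is valid for $\Re s$ sufficiently large, and insert \eqref{eq:H-minus-AET}. This yields $\frac1a\zeta(s)+\zeta(1-a)\zeta(s+a)+B_a^-(s)$.

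The second step is the extension to the stated half-planes. The left-hand sides are meromorphic in $s$, because $\zeta_2$ is meromorphic on $\C^2$ by \cite{Zhao,AkiyamaEgamiTanigawa}; restricting to a fixed first variable gives a meromorphic function of $s$, provided that line is not contained in the polar locus. On the right-hand sides, $\zeta(s)$, $\zeta(s+a)$ and $\zeta'(s)$ are meromorphic. The functions $B$ are holomorphic where they converge absolutely. The remainder bounds \eqref{eq:r-plus-AET}--\eqref{eq:r-minus-AET} give $r_a^+(n)=O(n^{-1-\Re a})$ and $r_b,r_0,r_a^-=O(n^{-1})$. Hence $B_a^+$ is holomorphic for $\Re s>-\Re a$, and the others for $\Re s>0$, by uniform convergence on compacta. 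The identity theorem on the connected domain minus the discrete pole sets then extends each identity from the half-plane of absolute convergence to the claimed region.

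The only point requiring care is the legitimacy of the termwise splitting in the first step. One needs the convergence region of \eqref{eq:double-zeta-harmonic} to overlap that of all three pieces, which is clear for $\Re s$ large. One also needs the fixed first variable line not to lie inside the polar set of $\zeta_2$. The polar locus of $\zeta_2$ is known to be $s_2=1$ and $s_1+s_2\in\{2,1,0,-2,-4,\dots\}$. For fixed first argument these are isolated values of $s$, so "away from polar loci" is meaningful, and no genuine obstacle arises.
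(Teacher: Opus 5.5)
Your proposal is correct and follows the paper's own proof. You verify each identity for $\Re s>1$ by substituting the expansions of Lemma~\ref{lem:Euler-Maclaurin-harmonic-expansions} into \eqref{eq:double-zeta-harmonic}, then extend to the stated half-planes using the meromorphic continuation of $\zeta_2$, the holomorphy of the remainder series $B$, and the identity theorem. Your extra remarks — that the convergence regions overlap and that the polar locus of $\zeta_2$ meets each line of fixed first variable only in isolated points — are sound details the paper leaves implicit.
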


\begin{proof}
We first assume that $\Re s>1$. 
Substituting \eqref{eq:H-plus-AET} into \eqref{eq:double-zeta-harmonic}, we obtain
\begin{align*}
\zeta_2(1+a,s)
&=\sum_{n=1}^{\infty} \frac{\zeta(1+a)-a^{-1}n^{-a}+r_a^{+}(n)}{n^s}\\
&=\zeta(1+a)\zeta(s)-\frac1a\zeta(s+a)+B_a^{+}(s).
\end{align*}
This proves \eqref{eq:decomposition-plus} in the region $\Re s>1$.

The other three identities follow in the same way. 
Indeed, using \eqref{eq:H-imaginary-AET}, \eqref{eq:H-zero-AET}, and \eqref{eq:H-minus-AET}, respectively, \eqref{eq:decomposition-imaginary}, \eqref{eq:decomposition-zero}, and \eqref{eq:decomposition-minus} also hold for $\Re s>1$.

By the results of Zhao \cite{Zhao} and Akiyama, Egami and Tanigawa \cite{AkiyamaEgamiTanigawa}, the functions on the left-hand sides have meromorphic continuations as functions of $s$. 
On the right-hand sides, the Riemann zeta-functions are meromorphic, while the remainder series are holomorphic in the respective half-planes stated above. 
Since the two sides agree in the nonempty open half-plane $\Re s>1$, the identity theorem for meromorphic functions extends \eqref{eq:decomposition-plus} to $\Re s>-\Re a$, and extends \eqref{eq:decomposition-imaginary}, \eqref{eq:decomposition-zero}, and \eqref{eq:decomposition-minus} to $\Re s>0$.
The asserted identities follow at every point in these domains at which both sides are regular.
\end{proof}

\subsection{Mean values of the Riemann zeta-function}
\label{subsec:mean-values-riemann-zeta}

We collect here the mean-value formulas for the Riemann zeta-function.
The classical mean-square formula \cite[Theorem~7.4 and the remark following its proof]{Titchmarsh} is
\begin{equation}
\label{eq:classical-zeta-mean-square}
 \int_2^T \left|\zeta\left(\frac12+it\right)\right|^2dt
 = T\log\frac{T}{2\pi} +(2\gamma-1)T +O\left(T^{1/2}\log^2T\right).
\end{equation}
For our purposes, we shall also need a version with fixed complex shifts. 
Ingham \cite[Theorem~A]{Ingham} considers the integral
$$
 \int_{\kappa}^{T} \zeta(\alpha+it)\zeta(\beta-it)\,dt.
$$
To express Ingham's formula in the notation used here, we put $\alpha=1/2+u,~\beta=1/2+\overline v$.
Then $\alpha+\beta=1+u+\overline v$.
The factor $1/2$ in the following lemma therefore comes only from our choice to measure the shifts from the critical line.

\begin{lemma}
\label{lem:fixed-shifted-second-moment}
Let $u,v\in\mathbb C$ be fixed, with $\Re u\geq0,~\Re v\geq0$, and put $c:=u+\overline v$.
Suppose that $c\neq0$, and choose $T_0\geq2$ so that the paths of integration contain no pole. 
Then
\begin{align*}
\int_{T_0}^{T} \zeta\left(\frac12+u+it\right)\overline{\zeta\left(\frac12+v+it\right)} dt
= T\zeta(1+c) +\zeta(1-c)
\int_{T_0}^{T} \left(\frac{t}{2\pi}\right)^{-c} dt
+o_{u,v}(T).
\end{align*}
\end{lemma}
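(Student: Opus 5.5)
The plan is to derive the lemma from Ingham's Theorem~A, with $\alpha=\tfrac12+u$ and $\beta=\tfrac12+\overline v$. Since $\overline{\zeta(\tfrac12+v+it)}=\zeta(\tfrac12+\overline v-it)$, the integrand is exactly $\zeta(\alpha+it)\zeta(\beta-it)$. The hypotheses $\Re u\ge0$ and $\Re v\ge0$ give $\Re\alpha,\Re\beta\ge\tfrac12$, and this is the range Ingham treats. His formula then reads
\[
\int_{T_0}^{T}\zeta(\alpha+it)\zeta(\beta-it)\,dt=\int_{T_0}^{T}\Bigl\{\zeta(\alpha+\beta)+\Bigl(\frac{t}{2\pi}\Bigr)^{1-\alpha-\beta}\zeta(2-\alpha-\beta)\Bigr\}dt+\text{error},
\]
with an error that is $o(T)$ (in fact $O(T^{1/2}\log^2T)$-type) for fixed shifts. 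Substituting $\alpha+\beta=1+c$ turns the first term into $(T-T_0)\zeta(1+c)=T\zeta(1+c)+O(1)$. The second term becomes $\zeta(1-c)\int_{T_0}^T(t/2\pi)^{-c}\,dt$, which is exactly the displayed formula. The hypothesis $c\neq0$ ensures that neither $\zeta(1+c)$ nor $\zeta(1-c)$ sits at the pole.

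If one prefers not to quote Ingham verbatim, I would reprove the formula by the standard approximate functional equation method. For $t\in[T/2,T]$, write
\[
\zeta(\tfrac12+u+it)=\sum_{n\le t/2\pi}n^{-1/2-u-it}+\chi(\tfrac12+u+it)\sum_{n\le t/2\pi}n^{-1/2+u+it}+O(t^{-1/2}),
\]
and use the same expansion for the $v$-factor. Then multiply out and integrate.
\begin{itemize}
\item The diagonal terms of the two ``$\sum$'' parts give $\sum_{n\le t/2\pi}n^{-1-c}$. By Lemma~\ref{lem:Euler-Maclaurin-harmonic-expansions} (the Euler--Maclaurin formula for $w=1+c$), this equals $\zeta(1+c)-(t/2\pi)^{-c}/c+O(t^{-1/2})$.
\item The two ``$\chi$'' parts have $|\chi|^2$-weight $\asymp(t/2\pi)^{-u-\overline v}$. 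Their diagonal contributes $(t/2\pi)^{-c}\sum_{n\le t/2\pi}n^{-1+c}$, which equals $(t/2\pi)^{-c}\zeta(1-c)+1/c+\dots$.
\item The $1/c$-type pieces from these two diagonals cancel. What survives is $\zeta(1+c)+\zeta(1-c)(t/2\pi)^{-c}$ pointwise on average. Integrating over dyadic blocks and summing recovers the claimed main terms.
\end{itemize}

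The main obstacle is controlling the off-diagonal terms, and also the cross terms between a $\sum$-part and a $\chi$-part, uniformly enough to get $o(T)$.
\begin{itemize}
\item The off-diagonal terms of the two $\sum$-parts I would handle by the Montgomery--Vaughan mean value theorem, or the classical bound $\sum_{m\neq n}(mn)^{-1/2}/|\log(m/n)|\ll T\log T$ (Hilbert inequality). Each dyadic block then contributes $O(T^{1/2}\log T)$-type errors rather than $O(T\log T)$, because the integration over $t$ gains a factor $1/|\log(m/n)|$ instead of the block length.
\item The cross terms carry the oscillatory factor $\chi(\tfrac12+it)\approx e^{-2it\log(t/2\pi e)}$, which has a stationary point. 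I would treat them by the first-derivative test, accepting an $O(T^{1/2+\varepsilon})$ bound. Any $o(T)$ suffices here.
\end{itemize}
Since the shifts are fixed, all the implied constants depend only on $u$ and $v$, which gives the subscript in $o_{u,v}(T)$.
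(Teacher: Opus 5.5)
Your main argument is the paper's proof: apply Ingham's Theorem~A with $\alpha=\tfrac12+u$ and $\beta=\tfrac12+\overline v$, substitute $\alpha+\beta=1+c$, and absorb the change of lower endpoint into an $O(1)$ term. Your optional approximate-functional-equation sketch is not needed, and as written it is wrong. The Hardy--Littlewood formula requires $2\pi xy=t$ (for instance $x=y=\sqrt{t/2\pi}$), and with both sums of length $t/2\pi$ the two $1/c$-terms leave $\bigl(1-(t/2\pi)^{-c}\bigr)/c$ instead of cancelling.
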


\begin{proof}
Since $\overline{ \zeta\left(1/2+v+it\right) } = \zeta\left(1/2+\overline v-it\right)$, putting $\alpha:=1/2+u,~\beta:=1/2+\overline v$, Ingham's Theorem~A gives
\begin{align*}
 &\int_{\kappa}^{T} \zeta(\alpha+it)\zeta(\beta-it)\,dt =
 2\pi F\left(\frac{T}{2\pi},1+c\right) +o_{u,v}(T),
\end{align*}
where
\[
 F(Y,1+c)
 = \int_1^Y \left\{ \zeta(1+c)+\tau^{-c}\zeta(1-c) \right\} d\tau.
\]
Consequently, after making the change of variables $t=2\pi\tau$, we obtain
\begin{align*}
 2\pi F\left(\frac{T}{2\pi},1+c\right)
 = (T-2\pi)\zeta(1+c)+ \zeta(1-c)
 \int_{2\pi}^{T} \left(\frac{t}{2\pi}\right)^{-c} dt.
\end{align*}
Since $\kappa$, $T_0$, and $2\pi$ are fixed, and the corresponding finite paths contain no pole, changing the lower endpoints on both sides to $T_0$ contributes only $O_{u,v}(1)$.
Moreover, $2\pi\zeta(1+c)=O_{u,v}(1)$.
These $O_{u,v}(1)$-terms are absorbed into $o_{u,v}(T)$, and the asserted formula follows.
\end{proof}

When the combined shift has positive real part, the second term in Lemma \ref{lem:fixed-shifted-second-moment} is of lower order.

\begin{corollary}
\label{cor:fixed-shifted-second-moment-positive}
Under the assumptions of Lemma~\ref{lem:fixed-shifted-second-moment}, suppose in addition
that $\Re(u+\overline v)>0$.
Then
\begin{align*}
 &\int_{T_0}^{T} \zeta\left(\frac12+u+it\right) \overline{ \zeta\left(\frac12+v+it\right) }\,dt
 = T\zeta(1+u+\overline v)+o_{u,v}(T).
\end{align*}
\end{corollary}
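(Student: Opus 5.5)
We apply Lemma~\ref{lem:fixed-shifted-second-moment} directly and show that its second term is $o_{u,v}(T)$ once $\Re c>0$. Write $c=u+\overline v$. The hypotheses $\Re u\ge0$, $\Re v\ge0$ together with the extra assumption give $\Re c=\Re u+\Re v>0$. In particular $c\neq0$, so the lemma applies and yields
\[
\int_{T_0}^{T} \zeta\left(\frac12+u+it\right)\overline{\zeta\left(\frac12+v+it\right)}\,dt
= T\zeta(1+c)+\zeta(1-c)\int_{T_0}^{T}\left(\frac{t}{2\pi}\right)^{-c}dt+o_{u,v}(T).
\]
The first term is already the claimed main term $T\zeta(1+u+\overline v)$. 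It remains to absorb the middle term into the error.

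Since $c\neq0$ is fixed, $\zeta(1-c)$ is a fixed finite constant, so only the integral needs bounding. Using $|(t/2\pi)^{-c}|=(t/2\pi)^{-\Re c}$, we get
\[
\left|\int_{T_0}^{T}\left(\frac{t}{2\pi}\right)^{-c}dt\right|
\le \int_{T_0}^{T}\left(\frac{t}{2\pi}\right)^{-\Re c}dt .
\]
We then split according to the size of $\Re c$.
\begin{itemize}
\item If $0<\Re c<1$, the right-hand side is $O_c(T^{1-\Re c})=o(T)$.
\item If $\Re c=1$, it is $O(\log T)$.
\item If $\Re c>1$, it is $O_c(1)$.
\end{itemize}
In every case the middle term is $o_{u,v}(T)$, and the corollary follows.

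The only point needing care is that $\zeta(1-c)$ is finite, i.e.\ that $1-c\neq1$; this holds because $\Re c>0$ forces $c\neq0$. No step is a genuine obstacle: the whole argument is the elementary estimate of $\int t^{-c}\,dt$ for $\Re c>0$.
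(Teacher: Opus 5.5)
Your proposal is correct and follows essentially the same route as the paper: apply Lemma~\ref{lem:fixed-shifted-second-moment} and show that $\zeta(1-c)\int_{T_0}^{T}(t/(2\pi))^{-c}\,dt=o_{u,v}(T)$ because $\Re c>0$. The only difference is cosmetic: the paper evaluates the integral exactly (separating $c=1$ from $c\neq1$), while you bound it by $\int_{T_0}^{T}(t/(2\pi))^{-\Re c}\,dt$ and split according to $\Re c$.
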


\begin{proof}
Put $c=u+\overline v$. 
If $c\neq1$, then
\[
 \int_{T_0}^{T} \left(\frac{t}{2\pi}\right)^{-c} dt
 = (2\pi)^c \frac{T^{1-c}-T_0^{1-c}}{1-c},
\]
whereas for $c=1$ this integral is $2\pi\log(T/T_0)$.
Thus, in either case, this integral is $o_c(T)$ because $\Re c>0$. 
\end{proof}

We record the following consequence for use in the middle boundary case.

\begin{corollary}
\label{cor:pure-imaginary-shift}
Let $b\in\mathbb R\setminus\{0\}$ be fixed.
Then
\begin{align}
 &\int_2^T \zeta\left(\frac12+it\right)
 \overline{ \zeta\left(\frac12+i(t+b)\right) }\,dt
 = T\zeta(1-ib) +\frac{\zeta(1+ib)}{1+ib} T\left(\frac{T}{2\pi}\right)^{ib}
 +o_b(T). 
 \label{eq:pure-imaginary-shift}
\end{align}
Moreover,
\begin{align}
 \int_2^T \left| \zeta\left(\frac12+i(t+b)\right) \right|^2dt
 &= T\log\frac{T}{2\pi} +(2\gamma-1)T+o_b(T).
 \label{eq:pure-imaginary-shift-square}
\end{align}
\end{corollary}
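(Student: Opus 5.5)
Both parts of Corollary~\ref{cor:pure-imaginary-shift} will follow from Lemma~\ref{lem:fixed-shifted-second-moment}. For \eqref{eq:pure-imaginary-shift}, take $u=0$ and $v=ib$. Then $\overline{\zeta(1/2+v+it)}=\overline{\zeta(1/2+i(t+b))}$, and $c=u+\overline v=-ib\neq0$, since $b\neq0$. The hypotheses $\Re u\ge0$ and $\Re v\ge0$ hold. The only pole of $\zeta$ is at $1$, and the paths $1/2+it$ and $1/2+i(t+b)$ never reach real part $1$, so we may take $T_0=2$ directly. The lemma then gives
\[
\int_2^T\zeta\left(\tfrac12+it\right)\overline{\zeta\left(\tfrac12+i(t+b)\right)}dt
=T\zeta(1-ib)+\zeta(1+ib)\int_2^T\left(\frac{t}{2\pi}\right)^{ib}dt+o_b(T).
\]
We evaluate the remaining integral explicitly:
\[
\int_2^T (t/2\pi)^{ib}dt=(2\pi)^{-ib}\,\frac{T^{1+ib}-2^{1+ib}}{1+ib}=\frac{T(T/2\pi)^{ib}}{1+ib}+O_b(1).
\]
Absorbing the $O_b(1)$ term into $o_b(T)$ gives \eqref{eq:pure-imaginary-shift}.

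For \eqref{eq:pure-imaginary-shift-square}, the lemma cannot be used, since $c=0$. Instead we substitute $t'=t+b$, which turns the integral into $\int_{2+b}^{T+b}|\zeta(1/2+it')|^2dt'$. We then compare it with the classical formula \eqref{eq:classical-zeta-mean-square}. Moving the lower endpoint from $2+b$ to $2$ costs only $O_b(1)$, because $\zeta$ is bounded on a fixed compact segment of the critical line. Moving the upper endpoint from $T+b$ to $T$ costs $\int_T^{T+b}|\zeta(1/2+it)|^2dt$. Using the convexity bound $\zeta(1/2+it)\ll t^{1/4}$, or simply the difference of \eqref{eq:classical-zeta-mean-square} evaluated at $T+b$ and at $T$, this is $O_b(T^{1/2}\log^2 T+\log T)=o_b(T)$.

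The main term also changes slightly. We have $(T+b)\log\frac{T+b}{2\pi}=T\log\frac{T}{2\pi}+O_b(\log T)$. Hence the formula \eqref{eq:classical-zeta-mean-square}, evaluated at $T+b$ and corrected by these endpoint errors, becomes $T\log\frac{T}{2\pi}+(2\gamma-1)T+o_b(T)$.

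The only point needing care is the endpoint bookkeeping when $b<0$, where $2+b$ may be below $2$ or even negative. Since $\zeta(1/2+it)$ is continuous for all real $t$, this still contributes only $O_b(1)$. No real obstacle is expected.
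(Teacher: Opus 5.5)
Your proposal is correct and follows the paper's proof exactly. You obtain the first formula from Lemma~\ref{lem:fixed-shifted-second-moment} with $u=0$, $v=ib$, $c=-ib$ together with the explicit evaluation of $\int_2^T (t/2\pi)^{ib}\,dt$, and the second from \eqref{eq:classical-zeta-mean-square} after the shift $t\mapsto t+b$; your endpoint bookkeeping, including the case $b<0$, only makes explicit what the paper leaves implicit.
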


\begin{proof}
In Lemma~\ref{lem:fixed-shifted-second-moment}, take $u=0,~ v=ib,~ c=-ib$.
Since
\begin{align*}
 \int_2^T \left(\frac{t}{2\pi}\right)^{ib} dt
 &= \frac{T}{1+ib} \left(\frac{T}{2\pi}\right)^{ib}+O_b(1),
\end{align*}
formula \eqref{eq:pure-imaginary-shift} follows from Lemma \ref{lem:fixed-shifted-second-moment}.  The second assertion follows from \eqref{eq:classical-zeta-mean-square} after the change of variable $u=t+b$.
\end{proof}

\begin{remark}
\label{rem:fixed-shift-secondary-term}
When $\Re(u+\overline v)>0$, the secondary term in Ingham's formula is absorbed into $o(T)$. 
For the purely imaginary shift in Corollary~\ref{cor:pure-imaginary-shift}, however, it is of order $T$ and produces the oscillatory factor $(T/(2\pi))^{ib}$.
\end{remark}

To evaluate the mean square involving $\zeta'(s)$ at the corner, we shall differentiate a shifted second-moment formula with respect to the shifts.  
For this purpose, we require a version that is uniform near the origin.
For complex variables $\alpha,\beta$, put
\[
 \mathcal M(\alpha,\beta;T)
 := \int_2^T \zeta\left(\frac12+\alpha+it\right) \zeta\left(\frac12+\beta-it\right)\,dt.
\]

\begin{lemma}
\label{lem:uniform-shifted-second-moment}
Let $\eta>0$ be fixed. 
Uniformly for $|\alpha|\leq\eta/\log T$ and $|\beta|\leq\eta/\log T$, we have
\begin{equation}
\label{eq:uniform-shifted-second-moment}
 \mathcal M(\alpha,\beta;T)
 = TQ(w;X) +O_\eta\left(T^{1/2}\log^2T\right),
\end{equation}
where $w=\alpha+\beta$ and
\begin{equation}
\label{eq:definition-Q}
 Q(w;X)
 := \zeta(1+w) + \frac{e^{-wX}}{1-w}\zeta(1-w).
\end{equation}
At $w=0$, the right-hand side of \eqref{eq:definition-Q} is interpreted by continuity.
\end{lemma}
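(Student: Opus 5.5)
Our approach is to obtain the uniform shifted second moment from the approximate functional equation, in the way one proves the twisted (Atkinson/Ingham type) mean value with shifts. We begin with the Hardy--Littlewood approximate functional equation
\[
\zeta\left(\tfrac12+\alpha+it\right)=\sum_{n\le x}n^{-1/2-\alpha-it}+\chi\left(\tfrac12+\alpha+it\right)\sum_{n\le y}n^{-1/2+\alpha+it}+O\left(x^{-1/2}+|t|^{1/2}y^{-1}\right),
\]
taken with $x=y=\sqrt{t/2\pi}$, and the analogous formula for $\zeta(1/2+\beta-it)$. Since $|\alpha|,|\beta|\le\eta/\log T$, factors such as $n^{\pm\alpha}$ and $(t/2\pi)^{\pm\alpha}$ stay bounded by $e^{O(\eta)}$ for $n,t\le T$. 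Hence every implied constant depends only on $\eta$.

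We then multiply the two expansions and integrate over $[2,T]$, splitting the range dyadically so that the lengths of the sums are essentially fixed on each piece.
\begin{itemize}
\item The diagonal terms $m=n$ of the product of the two first sums give
\[
\int\sum_{n\le\sqrt{t/2\pi}}n^{-1-w}\,dt .
\]
\item The diagonal terms of the product of the two $\chi$-sums give, after using $\chi(\tfrac12+\alpha+it)\chi(\tfrac12+\beta-it)=(t/2\pi)^{-w}\bigl(1+O(1/t)\bigr)$, the integral
\[
\int(t/2\pi)^{-w}\sum_{n\le\sqrt{t/2\pi}}n^{-1+w}\,dt .
\]
\item The off-diagonal terms are handled by the Montgomery--Vaughan mean value theorem, or by the first-derivative test. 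This gives $O(T^{1/2}\log^2T)$, as in Titchmarsh's proof of \eqref{eq:classical-zeta-mean-square}.
\item The cross terms between a plain sum and a $\chi$-sum contain the oscillating phase of $\chi$. They are estimated by the stationary phase lemma (Titchmarsh, Lemma 4.5), and are again $O(T^{1/2}\log T)$ uniformly in the small shifts.
\end{itemize}

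To evaluate the main terms we use the Euler--Maclaurin formula with $N=\sqrt{t/2\pi}$:
\[
\sum_{n\le N}n^{-1-w}=\zeta(1+w)-\frac{N^{-w}}{w}+O(N^{-1}),
\qquad
\sum_{n\le N}n^{-1+w}=\zeta(1-w)+\frac{N^{w}}{w}+O(N^{-1}).
\]
These hold uniformly for small $w$, with the $w=0$ case read as $\log N+\gamma$. The terms $\mp N^{\mp w}/w$ produce $-(t/2\pi)^{-w/2}/w$ from both pieces combined, and these cancel, leaving
\[
\int_2^T\Bigl(\zeta(1+w)+(t/2\pi)^{-w}\zeta(1-w)\Bigr)dt.
\]
Integrating gives
\[
T\zeta(1+w)+\zeta(1-w)\,\frac{T}{1-w}\,e^{-wX}+O_\eta(1),
\]
which is $TQ(w;X)$. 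The individual terms have poles at $w=0$, but $Q$ is entire in $w$ near $0$. We therefore make the cancellation rigorous by writing everything as contour integrals, or by noting that both sides are holomorphic in $\alpha,\beta$. It then suffices to bound the error on the circle $|w|=\eta/\log T$ and apply the maximum principle.

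The main obstacle is keeping all the error terms uniform in $\alpha,\beta$ at size $\asymp 1/\log T$, while the individual main terms blow up like $1/w$. To handle this, we would first prove the estimate for $|\alpha|,|\beta|$ on a slightly larger polycircle of radius $2\eta/\log T$, restricted to points where $w\neq0$. Holomorphy of $\mathcal M(\alpha,\beta;T)-TQ(w;X)$ in $(\alpha,\beta)$ then transfers the bound to the whole region, including $w=0$, with constants depending only on $\eta$.
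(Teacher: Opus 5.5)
There is a genuine gap in the error term. The remainder of the approximate functional equation never appears in your list of contributions, and the tools you describe cannot make it $O_\eta(T^{1/2}\log^2T)$.

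First, the remainder as you quote it, $O(x^{-1/2}+|t|^{1/2}y^{-1})$, is $O(1)$ when $y=\sqrt{t/2\pi}$. The Hardy--Littlewood remainder is $O(x^{-\sigma}+t^{1/2-\sigma}y^{\sigma-1})$, which is $O(t^{-1/4})$ here. Even with $O(t^{-1/4})$, however, multiplying it by the other factor contributes a term to $\mathcal M(\alpha,\beta;T)$ that you can only bound by $\int_2^T t^{-1/4}|\zeta(\tfrac12+\beta-it)|\,dt\ll_\eta T^{3/4}(\log T)^{1/2}$. No estimate with absolute values can do better: $\int_{T/2}^{T}|\zeta(\tfrac12+it)|\,dt\gg T$ already forces this integral to be $\gg T^{3/4}$. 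So your argument gives at most $\mathcal M(\alpha,\beta;T)=TQ(w;X)+O_\eta(T^{3/4}(\log T)^{1/2})$.

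The appeal to Titchmarsh does not close this, because the $T^{1/2}\log^2T$ error in \eqref{eq:classical-zeta-mean-square} is not what the $x=y$ functional equation with a crudely bounded remainder produces. Reaching $T^{1/2}$ needs one of two things. One option is to keep the explicit $t^{-1/4}$ Riemann--Siegel correction term and prove that its cross integrals oscillate. The other is an Ingham/Atkinson-type argument via $\zeta(s)\zeta(1-s)=\chi(1-s)\zeta(s)^2$ and a shifted divisor problem.

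The paper avoids all of this by citing Bettin's uniform theorem. That theorem already contains the off-diagonal, cross-term and remainder analysis with the $O_\eta(T^{1/2}\log^2T)$ bound, with the factor $\chi(\tfrac12+\alpha+it)\chi(\tfrac12+\beta-it)$ still present. The paper's own work is only to replace that factor by $(t/2\pi)^{-w}\{1+O_\eta(|w|/t)\}$ and integrate, with the leftover constant $R(w)$ holomorphic and bounded near $w=0$. Your Euler--Maclaurin computation, where the $\mp N^{\mp w}/w$ terms cancel and the integral gives $TQ(w;X)$, is correct and matches that final step.

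Your weaker bound would in fact suffice for the only place the lemma is used. In Corollary~\ref{cor:mean-square-A}, Cauchy's formula on circles of radius $\asymp 1/\log T$ loses only a factor $(\log T)^2$, giving $O(T^{3/4}(\log T)^{5/2})=o(T)$. But as a proof of the lemma as stated, with error $O_\eta(T^{1/2}\log^2T)$, the proposal is incomplete.
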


\begin{proof}
Bettin's uniform shifted second-moment formula \cite[Theorem~1]{Bettin}, applied with his parameters, which we denote $a=\alpha$ and $b=-\beta$, gives
\begin{align}
 \mathcal M(\alpha,\beta;T)
 &= \int_2^T
 \Biggl\{ \zeta(1+w) + \zeta(1-w) \chi\left(\frac12+\alpha+it\right) \chi\left(\frac12+\beta-it\right)
 \Biggr\} dt \notag\\
 &\quad
 +O_\eta\left(T^{1/2}\log^2T\right)
 \label{eq:Bettin-shifted-form}
\end{align}
where $\chi(s) := \pi^{s-1/2} \Gamma\left(\frac{1-s}{2}\right)/\Gamma\left(\frac{s}{2}\right)$, so that $\zeta(s)=\chi(s)\zeta(1-s)$.
Changing the lower endpoint in Bettin's formula from $0$ to $2$ contributes $O_\eta(1)$, which is absorbed into the error term above.

We next remove the two $\chi$-factors. 
Put $z:=1/2+\alpha+it$.
Since $1/2+\beta-it=1-z+w$ and $\chi(z)\chi(1-z)=1$,
\begin{align}
 \chi\left(\frac12+\alpha+it\right) \chi\left(\frac12+\beta-it\right)
 = \frac{\chi(1-z+w)}{\chi(1-z)}
 =
 \exp\left\{ w\int_0^1\frac{\chi'}{\chi}(1-z+\theta w)\,d\theta \right\}.
 \label{eq:chi-ratio}
\end{align}
Here $1-z+\theta w$ has bounded real part $1/2+O(1/\log T)$ and imaginary part $-t+O(1/\log T)$, while logarithmic differentiation of $\chi(s)$ gives
\[
 \frac{\chi'}{\chi}(s)
 = \log\pi -\frac12\psi\left(\frac{1-s}2\right) -\frac12\psi\left(\frac s2\right),
 \qquad\psi:=\frac{\Gamma'}{\Gamma}.
\]
Both arguments have modulus $\asymp t$ with opposite imaginary parts, so $\psi(\xi)=\log\xi+O\left(|\xi|^{-1}\right)$ applies to each and the two terms $\pm i\pi/2$ cancel, leaving
\[
 \frac{\chi'}{\chi}(1-z+\theta w)
 = -\log\frac t{2\pi}+O_\eta\left(\frac1t\right)
 \qquad(2\leq t\leq T,\ 0\leq\theta\leq1),
\]
uniformly.  
Inserting this into \eqref{eq:chi-ratio} yields
\begin{align}
 \chi\left(\frac12+\alpha+it\right) \chi\left(\frac12+\beta-it\right)
 = \left(\frac t{2\pi}\right)^{-w} \left\{1+O_\eta\left(\frac{|w|}t\right)\right\}.
 \label{eq:chi-product-approximation}
\end{align}

Since $|w\zeta(1-w)|\ll_\eta1$ and $|(t/(2\pi))^{-w}|\ll_\eta1$, the contribution of the error term in \eqref{eq:chi-product-approximation} to \eqref{eq:Bettin-shifted-form} is $ \ll_\eta \int_2^T dt/t \ll_\eta\log T$.
This is absorbed into $O_\eta(T^{1/2}\log^2T)$. 
We therefore obtain
\begin{align*}
 \mathcal M(\alpha,\beta;T)
 &= \int_2^T \left\{ \zeta(1+w) + \zeta(1-w) \left(\frac{t}{2\pi}\right)^{-w} \right\} dt
 +O_\eta\left(T^{1/2}\log^2T\right).
\end{align*}

For $w\neq0$, direct integration gives
\begin{align*}
 &\int_2^T \left\{ \zeta(1+w) + \zeta(1-w) \left(\frac{t}{2\pi}\right)^{-w} \right\} dt
 = T\left\{ \zeta(1+w) + \frac{e^{-wX}}{1-w}\zeta(1-w) \right\} +R(w),
\end{align*}
where
\[
 R(w) := -2\zeta(1+w) - \frac{(2\pi)^w2^{1-w}}{1-w}\zeta(1-w).
\]
Although the two terms defining $R(w)$ have apparent singularities at $w=0$, their residues cancel. 
Thus $R(w)$ is holomorphic in a neighborhood of $w=0$ and $R(w)\ll_\eta1$ for $|w|\leq2\eta/\log T$.
Hence we obtain \eqref{eq:uniform-shifted-second-moment}.
The formula for $w=0$ follows by continuity, since the singularities of the two terms defining $Q(w;X)$ cancel.
\end{proof}

We shall need derivatives of\eqref{eq:uniform-shifted-second-moment}. 
Using the convention\eqref{eq:Stieltjes-convention-prelim} and expanding\eqref{eq:definition-Q} at $w=0$, we find
\begin{align}
 Q(w;X)
 &= X+2\gamma-1 + \left\{ -\frac12X^2+(1-\gamma)X+\gamma-1 \right\}w \notag\\
 &\quad+
 \left\{ \frac16X^3+\frac{\gamma-1}{2}X^2+(1-\gamma-\gamma_1)X-1+\gamma+\gamma_1+\gamma_2 \right\}w^2 \notag\\
 &\quad+O_\eta\bigl((1+X^4)|w|^3\bigr).
\label{eq:Q-Taylor-expansion}
\end{align}

\begin{corollary}
\label{cor:mean-square-A}
As $T\to\infty$,
\begin{equation}
\label{eq:mean-square-A}
 \int_2^T \left|-\zeta'\left(s_t\right)+\gamma\zeta\left(s_t\right)\right|^2 dt
 = TP_{-\zeta'+\gamma\zeta}\left(\log\frac{T}{2\pi}\right)+o(T),
\end{equation}
where
\begin{align}
 P_{-\zeta'+\gamma\zeta}(X)
 &= \frac13X^3 + (2\gamma-1)X^2 +
 \left( 3\gamma^2-4\gamma+2-2\gamma_1 \right)X \notag\\
 &\quad -2+4\gamma-3\gamma^2+2\gamma^3+2\gamma_1+2\gamma_2.
\label{eq:polynomial-PA}
\end{align}
\end{corollary}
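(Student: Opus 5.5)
Write $A(s):=-\zeta'(s)+\gamma\zeta(s)$. Then
\[
|A(s_t)|^2=|\zeta'(s_t)|^2-2\gamma\,\Re\bigl(\zeta'(s_t)\overline{\zeta(s_t)}\bigr)+\gamma^2|\zeta(s_t)|^2 .
\]
The plan is to express each of these three mean values through derivatives of $\mathcal M(\alpha,\beta;T)$ at $\alpha=\beta=0$. Since $\overline{\zeta(s_t)}=\zeta(1/2-it)$, we have $\mathcal M(0,0;T)=\int_2^T|\zeta(s_t)|^2dt$. Differentiating under the integral sign gives
\[
\partial_\alpha\mathcal M(0,0;T)=\int_2^T\zeta'(s_t)\overline{\zeta(s_t)}\,dt,
\qquad
\partial_\alpha\partial_\beta\mathcal M(0,0;T)=\int_2^T|\zeta'(s_t)|^2\,dt .
\]
Similarly $\partial_\beta\mathcal M(0,0;T)$ is the complex conjugate of $\partial_\alpha\mathcal M(0,0;T)$. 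Therefore
\[
\int_2^T|A(s_t)|^2dt=\bigl(\partial_\alpha\partial_\beta-\gamma\partial_\alpha-\gamma\partial_\beta+\gamma^2\bigr)\mathcal M(\alpha,\beta;T)\Big|_{\alpha=\beta=0}.
\]

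Next I would transfer these derivatives to the main term in Lemma~\ref{lem:uniform-shifted-second-moment} by Cauchy's integral formula. The error
\[
E(\alpha,\beta):=\mathcal M(\alpha,\beta;T)-TQ(\alpha+\beta;X)
\]
is holomorphic in the polydisc $|\alpha|,|\beta|\le\eta/\log T$. Here I would check that $\mathcal M$ is holomorphic there: the path stays away from the pole of $\zeta$ since $t\ge2$. The lemma gives $E\ll T^{1/2}\log^2T$ on this polydisc. Cauchy's estimates on circles of radius $\eta/(2\log T)$ then bound the first and mixed second derivatives of $E$ at the origin by $O(T^{1/2}\log^4T)=o(T)$. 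This is the point where uniformity in the shifts is essential, and I expect it to be the only genuinely delicate step. The rest is bookkeeping.

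Since $TQ$ depends only on $w=\alpha+\beta$, we have $\partial_\alpha=\partial_\beta=d/dw$ and $\partial_\alpha\partial_\beta=d^2/dw^2$ when applied to it. Write the expansion \eqref{eq:Q-Taylor-expansion} as $Q=q_0+q_1w+q_2w^2+\cdots$. The main term is then
\[
T\bigl(2q_2-2\gamma q_1+\gamma^2q_0\bigr).
\]

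Finally I would substitute the coefficients from \eqref{eq:Q-Taylor-expansion}:
\begin{itemize}
\item $2q_2=\tfrac13X^3+(\gamma-1)X^2+2(1-\gamma-\gamma_1)X-2+2\gamma+2\gamma_1+2\gamma_2$;
\item $-2\gamma q_1=\gamma X^2-2\gamma(1-\gamma)X-2\gamma^2+2\gamma$;
\item $\gamma^2q_0=\gamma^2X+2\gamma^3-\gamma^2$.
\end{itemize}
Summing these, the coefficient of $X^2$ is $2\gamma-1$. The coefficient of $X$ is $2-2\gamma-2\gamma_1-2\gamma+2\gamma^2+\gamma^2=3\gamma^2-4\gamma+2-2\gamma_1$. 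The constant term is $-2+4\gamma-3\gamma^2+2\gamma^3+2\gamma_1+2\gamma_2$. This matches \eqref{eq:polynomial-PA} and yields \eqref{eq:mean-square-A}.
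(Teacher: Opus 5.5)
Your proposal is correct and follows essentially the same route as the paper. You write the integral as $(-\partial_\alpha+\gamma)(-\partial_\beta+\gamma)\mathcal M(\alpha,\beta;T)$ at the origin, bound the derivatives of $E$ by Cauchy's estimates on circles of radius $\eta/(2\log T)$, and read off $T\{Q''(0;X)-2\gamma Q'(0;X)+\gamma^2Q(0;X)\}$ from the Taylor expansion; your coefficient arithmetic checks out and reproduces the stated polynomial.
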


\begin{proof}
Since the estimate in Lemma~\ref{lem:uniform-shifted-second-moment} is uniform in a neighborhood of $(\alpha,\beta)=(0,0)$, Cauchy's integral formula allows us to differentiate it with respect to both shifts. 
From
\[
 -\zeta'\left(s_t+\alpha\right)+\gamma\zeta\left(s_t+\alpha\right)
 = \left(-\frac{\partial}{\partial\alpha}+\gamma\right) \zeta\left(s_t+\alpha\right),
\]
we obtain
\begin{align*}
 &\int_2^T \left|-\zeta'\left(s_t\right)+\gamma\zeta\left(s_t\right)\right|^2 dt
 = \left.
 \left(-\frac{\partial}{\partial\alpha}+\gamma\right) \left(-\frac{\partial}{\partial\beta}+\gamma\right) \mathcal M(\alpha,\beta;T)
 \right|_{\alpha=\beta=0}.
\end{align*}
Because $Q(\alpha+\beta;X)$ depends on the shifts only through $w=\alpha+\beta$, the main term on the right-hand side is $T\left\{ Q''(0;X)-2\gamma Q'(0;X)+\gamma^2Q(0;X) \right\}$.

To justify the differentiation of the error term, define $E(\alpha,\beta;T):=\mathcal M(\alpha,\beta;T) -TQ(\alpha+\beta;X)$.
Both $\mathcal M(\alpha,\beta;T)$ and $Q(\alpha+\beta;X)$, with the removable singularity at $\alpha+\beta=0$ filled in, are holomorphic in $(\alpha,\beta)$ in a neighborhood of $(0,0)$.
Moreover, Lemma~\ref{lem:uniform-shifted-second-moment} gives $E(\alpha,\beta;T) \ll_\eta T^{1/2}\log^2T$ uniformly for $|\alpha|,|\beta| \leq \eta/\log T$.
Put $r:=\eta/(2\log T)$.
Applying Cauchy's integral formula on the circles $|\alpha|=r$ and $|\beta|=r$, we obtain
\begin{align*}
 &E(0,0;T) \ll_\eta T^{1/2}\log^2T,\\
 &\frac{\partial E}{\partial\alpha}(0,0;T),~\frac{\partial E}{\partial\beta}(0,0;T)
 \ll_\eta r^{-1}T^{1/2}\log^2T
 \ll_\eta T^{1/2}\log^3T,
\end{align*}
and
\begin{align*}
 \frac{\partial^2E}{\partial\alpha\,\partial\beta}(0,0;T)
 &= \frac{1}{(2\pi i)^2}
 \int_{|u|=r}\int_{|v|=r} \frac{E(u,v;T)}{u^2v^2}\,dv\,du\\
 &\ll_\eta r^{-2}T^{1/2}\log^2T
 \ll_\eta T^{1/2}\log^4T.
\end{align*}
Therefore,
\begin{align*}
 &\left.
 \left(-\frac{\partial}{\partial\alpha}+\gamma\right)
 \left(-\frac{\partial}{\partial\beta}+\gamma\right)
 E(\alpha,\beta;T) \right|_{\alpha=\beta=0}\ll_\eta T^{1/2}\log^4T=o(T).
\end{align*}
Finally, substituting the coefficients from \eqref{eq:Q-Taylor-expansion} gives \eqref{eq:polynomial-PA}.
\end{proof}

\subsection{Mixed mean values with an absolutely convergent Dirichlet series}
\label{subsec:mixed-mean-values}

We next prepare mean-value formulas involving a Dirichlet series which is absolutely convergent on the critical line. 
The main analytic tool is the Montgomery--Vaughan mean-value theorem for Dirichlet polynomials \cite{MontgomeryVaughan}. 
We shall also use the classical truncated formula for the Riemann zeta-function \cite[Theorem~4.11]{Titchmarsh}. 
Similar diagonal and off-diagonal decompositions are used in the study of the mean square of the Euler--Zagier double zeta-function; see, for example, Matsumoto and Tsumura \cite{MatsumotoTsumura}.

For the convenience of readers less familiar with mean-value theorems for Dirichlet polynomials, we first recall the precise form that will be used below. 
If
\[
 P_{\boldsymbol a}(t) := \sum_{n\leq N}a_n n^{-it},
 \qquad
 P_{\boldsymbol b}(t) := \sum_{n\leq N}b_n n^{-it},
\]
then the Montgomery--Vaughan theorem, in its bilinear form, gives
\begin{align}
 \int_U^{U+L} P_{\boldsymbol a}(t) \overline{P_{\boldsymbol b}(t)}\,dt
 &=
 L\sum_{n\leq N}a_n\overline{b_n}+
 O\left( \left(\sum_{n\leq N}n|a_n|^2\right)^{1/2} \left(\sum_{n\leq N}n|b_n|^2\right)^{1/2} \right).
 \label{eq:MV-bilinear}
\end{align}
The first term on the right-hand side is the contribution from equal indices. 
Thus \eqref{eq:MV-bilinear} estimates all the off-diagonal terms collectively.

For a sequence $(d_n)_{n\geq1}$ of complex numbers satisfying $d_n\ll n^{-1}$, let $D(s):=\sum_{n=1}^{\infty} d_n n^{-s}$.
Then,
\begin{equation}
 \sup_{t\in\mathbb R} \left|D\left(s_t\right)\right|
 \leq \sum_{n=1}^{\infty}\frac{|d_n|}{\sqrt n}
 < \infty.
 \label{eq:D-uniform-bound}
\end{equation}

\begin{lemma}
\label{lem:mixed-Dirichlet-prelim}
Let $u\in\mathbb C$ be fixed with $\Re u\geq0$, and suppose that the path $1/2+u+it$ ($t\geq T_0$) does not pass through the pole $s=1$. 
Then
\begin{align}
 \int_{T_0}^{T} \zeta(s_t+u)\overline{D(s_t)}\,dt
 &= T\sum_{n=1}^{\infty} \frac{\overline{d_n}}{n^{1+u}} +o_{u,D}(T).
 \label{eq:mixed-zeta-D-prelim}
\end{align}
Moreover,
\begin{align}
 \int_{T_0}^{T}|D(s_t)|^2dt
 &= T\sum_{n=1}^{\infty}\frac{|d_n|^2}{n} + O_D(1).
 \label{eq:D-square-prelim}
\end{align}
\end{lemma}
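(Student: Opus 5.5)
We first treat \eqref{eq:D-square-prelim}, since it is purely a Dirichlet polynomial computation. Fix a truncation $N=N(T)$ (e.g.\ $N=T^2$) and split $D(s_t)=D_N(t)+R_N(t)$ with $D_N(t)=\sum_{n\le N}d_n n^{-1/2-it}$. Because $d_n\ll n^{-1}$, the tail satisfies $|R_N(t)|\le\sum_{n>N}|d_n|n^{-1/2}\ll N^{-1/2}$ uniformly in $t$. Applying \eqref{eq:MV-bilinear} with $a_n=b_n=d_nn^{-1/2}$ on $[T_0,T]$ gives
\[
\int_{T_0}^T|D_N(t)|^2dt=(T-T_0)\sum_{n\le N}\frac{|d_n|^2}{n}+O\Bigl(\sum_{n\le N}|d_n|^2\Bigr),
\]
and $\sum|d_n|^2\ll\sum n^{-2}<\infty$, so the error is $O_D(1)$. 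Completing the diagonal sum costs $T\sum_{n>N}|d_n|^2/n\ll T/N^2$. The cross terms with $R_N$ are bounded by Cauchy--Schwarz and \eqref{eq:D-uniform-bound} as $\ll T N^{-1/2}$. With $N=T^2$ all of these are $O_D(1)$, which proves \eqref{eq:D-square-prelim}.

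For \eqref{eq:mixed-zeta-D-prelim}, we replace $\zeta(s_t+u)$ by a Dirichlet polynomial using \cite[Theorem~4.11]{Titchmarsh} with $x$ proportional to $t$. It is cleaner to work dyadically. On $t\in[U,2U]$, take $x=2U$ (any $x>Ct/(2\pi)$ with a fixed $C>1$ works). Then
\[
\zeta(s_t+u)=\sum_{n\le x}n^{-1/2-u-it}-\frac{x^{1/2-u-it}}{1/2-u-it}+O(x^{-1/2-\Re u}),
\]
uniformly for $t\in[U,2U]$; the middle term is $O(U^{-1/2})$. Likewise truncate $D$ at $x$, with tail $O(x^{-1/2})$.

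\begin{itemize}
\item The main piece is $\int_U^{2U}\sum_{n\le x}n^{-1/2-u-it}\,\overline{D_x(t)}\,dt$. By \eqref{eq:MV-bilinear} with $a_n=n^{-1/2-u}$ and $b_n=d_nn^{-1/2}$, it equals
\[
U\sum_{n\le x}\frac{\overline{d_n}}{n^{1+u}}+O\Bigl(\Bigl(\sum_{n\le x}n^{-2\Re u}\Bigr)^{1/2}\Bigl(\sum|d_n|^2\Bigr)^{1/2}\Bigr)=U\sum_{n\le x}\frac{\overline{d_n}}{n^{1+u}}+O(U^{1/2}).
\]
\item Completing the sum to infinity costs $U\sum_{n>x}n^{-2}\ll 1$.
\item The remaining cross terms pair a small uniform error, $O(U^{-1/2})$, against either $\zeta$ or $D$. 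For those involving $D$ we use \eqref{eq:D-uniform-bound}. For those involving the polynomial we use Cauchy--Schwarz together with the mean square of $\sum_{n\le x}n^{-1/2-u-it}$, which is $\ll U\log U$ by \eqref{eq:MV-bilinear}. This gives $\ll U^{1/2}\cdot(U\log U)^{1/2}\cdot U^{-1/2}\cdot U^{1/2}\ll U^{1/2}(\log U)^{1/2}$ per block.
\end{itemize}

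Summing over the $O(\log T)$ dyadic blocks gives a total error of $O(T^{1/2}\log T)=o(T)$. The piece $[T_0,\text{first block}]$ is handled separately: it is a fixed compact interval avoiding the pole, so it contributes $O(1)$.

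The main obstacle is the bookkeeping in the cross term between the approximate-formula error and $\zeta$ itself, and the uniformity of Theorem~4.11 for $\Re u$ large. For $\Re u\ge 1/2+\delta$ one can bypass the approximate formula and simply use absolute convergence (or Corollary-style direct Montgomery--Vaughan). The case where $\Re u$ is near $1/2$ also needs care, since the path $1/2+u+it$ then passes near $s=1$; this only affects finitely many $t$ and is absorbed into $O_{u}(1)$.
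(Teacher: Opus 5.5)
Your proof is correct. For \eqref{eq:mixed-zeta-D-prelim} it takes a different route from the paper; for \eqref{eq:D-square-prelim} it is essentially the paper's argument.

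\textbf{Comparison for the mixed mean.} The paper splits once at $Y=T^{3/4}$.
\begin{itemize}
\item On $[T_0,Y]$ it uses the a priori bound $\int|\zeta(s_t+u)|^2dt\ll Y\log Y$, together with Cauchy--Schwarz and the boundedness of $D$.
\item On $[Y,T]$ it uses a single Dirichlet polynomial of length $N\asymp T$. The cutoff $t\ge T^{3/4}$ is exactly what makes $N^{1/2}/|s_t+u-1|\ll T^{-1/4}$. One application of the Montgomery--Vaughan bilinear bound then finishes.
\end{itemize}
Your dyadic scheme instead matches the polynomial length $x=2U$ to $t$ on each block. This gives you three things:
\begin{itemize}
\item the term $x^{1-s}/(1-s)$ is $O(U^{-1/2})$ throughout;
\item no mean-square input for $\zeta$ is needed on an initial range, only continuity on a fixed compact interval avoiding the pole;
\item the total error improves to $O(T^{1/2}(\log T)^{1/2})$, against the paper's $O(T^{3/4}(\log T)^{1/2})$.
\end{itemize}
The cost is bookkeeping over $O(\log T)$ blocks, including a possibly partial last block. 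Since only $o(T)$ is claimed, both routes suffice.

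\textbf{Comparison for the mean square of $D$.} The paper keeps $T$ fixed and lets the truncation $M\to\infty$, using uniform convergence of $D_M(s_t)$. You take $N=T^2$ and bound the tails explicitly. These amount to the same thing.

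\textbf{One slip to fix.} The chain $U^{1/2}\cdot(U\log U)^{1/2}\cdot U^{-1/2}\cdot U^{1/2}$, as written, equals $U(\log U)^{1/2}$. Summed over the blocks this would give $T(\log T)^{1/2}$, which is not $o(T)$. The correct estimate for the cross term with the polynomial has only three factors:
\[
\sup_t|D(s_t)-D_x(s_t)|\cdot U^{1/2}\Bigl(\int_U^{2U}|P_x(t)|^2dt\Bigr)^{1/2}\ll U^{-1/2}\cdot U^{1/2}\cdot(U\log U)^{1/2}=U^{1/2}(\log U)^{1/2}.
\]
The other cross term is $\ll U^{-1/2}\cdot U\cdot\sup_t|D(s_t)|\ll U^{1/2}$. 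With this correction your per-block bound holds and the summation goes through.

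\textbf{Closing caveats.} Your remarks about large $\Re u$ and about $\Re u$ near $1/2$ are unnecessary. Since $u$ is fixed, Titchmarsh's Theorem~4.11 is uniform for $\sigma\ge\sigma_0>0$. The pole can only matter on the compact initial interval, which the hypothesis on the path already covers.
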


\begin{proof}
We first prove \eqref{eq:mixed-zeta-D-prelim}. 
Put $Y:=T^{3/4}$ and choose $N=\lceil C_uT\rceil$, where $C_u>0$ is a sufficiently large constant depending only on $u$. 
We split the integral as $\int_{T_0}^{T} = \int_{T_0}^{Y} + \int_Y^T$.

By the standard second-moment estimate for the Riemann zeta-function on a fixed vertical line,
\[
 \int_{T_0}^{Y} \left|\zeta(s_t+u)\right|^2 dt \ll_u Y\log(Y+2).
\]
Together with \eqref{eq:D-uniform-bound} and the Cauchy--Schwarz inequality, this gives
\begin{align}
 \int_{T_0}^{Y} \left| \zeta(s_t+u)\overline{D(s_t)} \right| dt
 &\ll_{u,D}
 Y^{1/2}
 \left( \int_{T_0}^{Y}|\zeta(s_t+u)|^2dt \right)^{1/2} \notag\\
 &\ll_{u,D}Y(\log Y)^{1/2}
 = o(T).
 \label{eq:mixed-small-range}
\end{align}

We now consider $Y\leq t\leq T$.
By the classical truncation formula \cite[Theorem~4.11, equation~(4.11.1)]{Titchmarsh}, for any fixed $C>1$, uniformly when $\left|t+\Im u\right|<2\pi N/C$,
we have
\begin{align}
 \zeta(s_t+u)
 &= \sum_{m\leq N}\frac{1}{m^{s_t+u}} + \frac{N^{1-s_t-u}}{s_t+u-1} + O_u\left(N^{-1/2-\Re u}\right).
 \label{eq:zeta-truncated-for-mixed}
\end{align}
Since $t\geq Y=T^{3/4}$ and $N\asymp_u T$, we have
\[
 \frac{N^{1/2-\Re u}}{|s_t+u-1|}
 \ll_u\frac{T^{1/2}}{Y}
 \ll T^{-1/4}.
\]
Consequently,
\begin{equation}
 \zeta(s_t+u) = P_u(t)+O_u(T^{-1/4}),
 \qquad
 P_u(t):= \sum_{m\leq N}\frac{1}{m^{1/2+u+it}},
 \label{eq:zeta-polynomial-for-mixed}
\end{equation}
uniformly for $Y\leq t\leq T$. 
By \eqref{eq:D-uniform-bound}, the contribution of the error term in \eqref{eq:zeta-polynomial-for-mixed} is
\[
 \ll_{u,D}T^{-1/4}(T-Y)
 \ll_{u,D}T^{3/4}
 =o(T).
\]

Next, define $D_N(s):=\sum_{n\leq N} d_n n^{-s}$.
Since $d_n\ll n^{-1}$, we have uniformly in $t$
\begin{align}
 |D(s_t)-D_N(s_t)|
 &\leq \sum_{n>N}\frac{|d_n|}{\sqrt n}
 \ll_D N^{-1/2}.
 \label{eq:D-tail-uniform}
\end{align}
The Montgomery--Vaughan mean-value theorem also gives
\begin{align*}
 \int_Y^T|P_u(t)|^2dt
 &\ll (T+N)\sum_{m\leq N}\frac1{m^{1+2\Re u}}
 \ll_u T\log T.
\end{align*}
It follows from Cauchy--Schwarz and \eqref{eq:D-tail-uniform} that
\begin{align*}
 \int_Y^T |P_u(t)| |D(s_t)-D_N(s_t)|\,dt
 &\ll_D N^{-1/2}T^{1/2} \left(\int_Y^T|P_u(t)|^2dt\right)^{1/2}\\
 &\ll_{u,D}T^{1/2}(\log T)^{1/2}
 = o(T).
\end{align*}
We have therefore reduced the problem to the mean value $\int_Y^T P_u(t)\overline{D_N(s_t)}\,dt$.

Apply \eqref{eq:MV-bilinear} with $a_n=n^{-1/2-u},~b_n=d_nn^{-1/2}$.
The diagonal term is $(T-Y)\sum_{n\leq N}\overline{d_n} n^{-1-u}$.
Moreover,
\[
 \sum_{n\leq N}n|a_n|^2 = \sum_{n\leq N}n^{-2\Re u} \ll_u N 
 \qquad\text{and}\qquad
 \sum_{n\leq N}n|b_n|^2 = \sum_{n\leq N}|d_n|^2 \ll_D 1.
\]
Hence \eqref{eq:MV-bilinear} gives
\begin{align}
 \int_Y^T P_u(t)\overline{D_N(s_t)}\,dt
 &= (T-Y) \sum_{n\leq N} \frac{\overline{d_n}}{n^{1+u}} + O_{u,D}(N^{1/2}).
 \label{eq:mixed-MV-application}
\end{align}

The series $\sum_{n=1}^{\infty} \overline{d_n}/n^{1+u}$ is absolutely convergent. 
In fact,
\[
 \sum_{n>N} \left| \frac{\overline{d_n}}{n^{1+u}} \right|
 \ll_{u,D} \sum_{n>N}n^{-2-\Re u}
 \ll_{u,D} N^{-1-\Re u}.
\]
Since $N\asymp T$ and $Y=o(T)$, it follows that
\begin{align*}
 (T-Y) \sum_{n\leq N}\frac{\overline{d_n}}{n^{1+u}}
 &= T\sum_{n=1}^{\infty} \frac{\overline{d_n}}{n^{1+u}} + o_{u,D}(T).
\end{align*}
Combining this with \eqref{eq:mixed-small-range}--\eqref{eq:mixed-MV-application} proves \eqref{eq:mixed-zeta-D-prelim}.

We next prove \eqref{eq:D-square-prelim}.  
Applying \eqref{eq:MV-bilinear} with $a_n=b_n=d_n/\sqrt n$, we obtain
\begin{align}
 \int_{T_0}^{T} |D_M(s_t)|^2 dt
 &= (T-T_0)\sum_{n\leq M}\frac{|d_n|^2}{n} + O\left(\sum_{n\leq M}|d_n|^2\right).
 \label{eq:D-N-square}
\end{align}
for $M\ge1$.
Because $d_n\ll n^{-1}$, both $\sum_{n=1}^{\infty}|d_n|^2/n$ and $\sum_{n=1}^{\infty}|d_n|^2$
converge. 
Furthermore, $D_M(s_t)$ converges uniformly to $D(s_t)$ for $t\in\R$. 
We may therefore let $M\to\infty$ in \eqref{eq:D-N-square}. 
This yields
\[
 \int_{T_0}^{T} |D(s_t)|^2 dt
 = (T-T_0)\sum_{n=1}^{\infty}\frac{|d_n|^2}{n} + O_D(1),
\]
which is equivalent to \eqref{eq:D-square-prelim}, since $T_0$ is fixed.
\end{proof}

We shall also need the corresponding mixed mean involving the derivative of the zeta-function.  

\begin{corollary}
\label{cor:A-D-mixed-prelim}
We have
\begin{align}
 \int_2^T (-\zeta'(s_t)+\gamma\zeta(s_t))\overline{D(s_t)}\,dt
 &= T\sum_{n=1}^{\infty} \frac{(\log n+\gamma)\overline{d_n}}{n} + o_D(T).
\end{align}
\end{corollary}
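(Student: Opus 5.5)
The plan is to treat this as the derivative analogue of \eqref{eq:mixed-zeta-D-prelim}, obtained by differentiating in the shift. Since $-\zeta'(s_t)+\gamma\zeta(s_t)=\left.\left(-\frac{\partial}{\partial u}+\gamma\right)\zeta(s_t+u)\right|_{u=0}$, one would like to apply Lemma~\ref{lem:mixed-Dirichlet-prelim} for $u$ in a small disc around $0$ and use Cauchy's formula. However, that lemma is stated only for $\Re u\ge0$ and with an unquantified $o(T)$, so a direct Cauchy argument is not immediately available. I would therefore instead redo the proof of Lemma~\ref{lem:mixed-Dirichlet-prelim} with $u=0$, replacing $\zeta$ by $-\zeta'+\gamma\zeta$ throughout.

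I would keep $Y=T^{3/4}$ and $N=\lceil CT\rceil$ as before.

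\textbf{Short range $[2,Y]$.} I would use Cauchy--Schwarz together with \eqref{eq:D-uniform-bound} and the classical bound $\int_2^Y|\zeta'(s_t)|^2dt\ll Y\log^3Y$. This gives a contribution of $O(Y(\log Y)^{3/2})=o(T)$.

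\textbf{Long range $[Y,T]$.} I would differentiate the truncated formula \eqref{eq:zeta-truncated-for-mixed} in $s$. Since it holds uniformly in a neighbourhood of the line, Cauchy's estimate on a disc of radius $\asymp1$ (or the derivative version of Titchmarsh's Theorem~4.11) gives
\[
 -\zeta'(s_t)+\gamma\zeta(s_t)=\sum_{m\le N}\frac{\log m+\gamma}{m^{s_t}}+O\left(\frac{N^{1/2}\log N}{t}+N^{-1/2}\log N\right).
\]
For $t\ge Y$ the error is $O(T^{-1/4}\log T)$, so its contribution is $O(T^{3/4}\log T)$. Call the Dirichlet polynomial $P(t)$.

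The Montgomery--Vaughan theorem gives $\int_Y^T|P|^2\ll T\log^3T$. Combining this with the tail bound \eqref{eq:D-tail-uniform}, replacing $D$ by $D_N$ costs
\[
 O\left(N^{-1/2}T^{1/2}(T\log^3T)^{1/2}\right)=O\left(T^{1/2}\log^{3/2}T\right).
\]

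\textbf{Diagonal and off-diagonal terms.} Apply \eqref{eq:MV-bilinear} with $a_n=(\log n+\gamma)n^{-1/2}$ and $b_n=d_nn^{-1/2}$. The diagonal term is $(T-Y)\sum_{n\le N}(\log n+\gamma)\overline{d_n}/n$. For the off-diagonal part, $\sum_{n\le N} n|a_n|^2\ll N\log^2N$ and $\sum_{n\le N} n|b_n|^2\ll1$, so it is $O(T^{1/2}\log T)$. The full series $\sum_n(\log n+\gamma)\overline{d_n}/n$ converges absolutely because $d_n\ll n^{-1}$, with tail $\ll N^{-1}\log N$. Hence completing the sum and replacing $T-Y$ by $T$ costs $o(T)$.

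\textbf{Main obstacle.} The only delicate point is the differentiated truncation formula. The remedy is to note that \eqref{eq:zeta-truncated-for-mixed} holds uniformly for $u$ in a complex disc $|u|\le1/4$, with both sides holomorphic in $u$, and then apply Cauchy's formula in $u$ at $u=0$. The term $N^{1-s}/(s-1)$ and its $u$-derivative are both $\ll N^{1/2}\log N/t$, which is small for $t\ge Y$.
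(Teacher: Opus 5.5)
Your proposal is correct and follows essentially the same route as the paper: the same splitting at $Y=T^{3/4}$ with $N\asymp T$, the differentiated truncation formula, removal of the tail of $D$, and the bilinear Montgomery--Vaughan theorem with $a_n=(\log n+\gamma)n^{-1/2}$ and $b_n=d_nn^{-1/2}$. The differences are cosmetic: the paper bounds the short range via Corollary~\ref{cor:mean-square-A} rather than the classical $\zeta'$ bound, and a Cauchy estimate on a disc of radius $\asymp1$ actually gives a truncation error $O(N^{-1/4})$ rather than $O(N^{-1/2}\log N)$ (the latter needs radius $\asymp1/\log N$), which is harmless either way.
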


\begin{proof}
Put again $Y=T^{3/4},~ N=\lceil CT\rceil$,
where $C>0$ is sufficiently large. 
Differentiating the truncated formula \eqref{eq:zeta-truncated-for-mixed}, or equivalently applying Cauchy's integral formula to it, gives, uniformly for $Y\leq t\leq T$,
\begin{align}
 -\zeta'(s_t)+\gamma\zeta(s_t)
 &= \sum_{m\leq N} \frac{\log m+\gamma}{m^{1/2+it}} + O\left(T^{-1/4}\log T\right).
 \label{eq:A-truncated}
\end{align}
Indeed, differentiation of $N^{1-s}/(s-1)$ produces terms bounded by $O( N^{1/2}\log N/t + N^{1/2}/t^2)=O(T^{-1/4}\log T)$ for $t\geq Y$.

Corollary~\ref{cor:mean-square-A} implies
\[
 \int_2^Y|-\zeta'(s_t)+\gamma\zeta(s_t)|^2dt\ll Y\log^3(Y+2).
\]
Since $D(s_t)$ is bounded on the critical line, Cauchy--Schwarz gives
\[
 \int_2^Y |(-\zeta'(s_t)+\gamma\zeta(s_t))D(s_t)| \,dt
 \ll_D Y\log^{3/2}(Y+2)
 =o(T).
\]
The error term in \eqref{eq:A-truncated} contributes at most $O_D\left(T^{3/4}\log T\right)=o(T)$ on the interval $[Y,T]$.

Define
\[
 P^\sharp(t)
 := \sum_{m\leq N} \frac{\log m+\gamma}{m^{1/2+it}}
 \qquad\text{and}\qquad
 D_N(s) := \sum_{n\leq N}\frac{d_n}{n^s}.
\]
As in the proof of Lemma~\ref{lem:mixed-Dirichlet-prelim}, the tail $D(s_t)-D_N(s_t)$ may be removed. 
Indeed,
\[
 \int_Y^T|P^\sharp(t)|^2dt
 \ll (T+N) \sum_{m\leq N}\frac{(\log m+\gamma)^2}{m}
 \ll T\log^3T,
\]
and hence
\begin{align*}
 \int_Y^T |P^\sharp(t)| |D(s_t)-D_N(s_t)| \,dt
 &\ll_D N^{-1/2}T^{1/2} \left(\int_Y^T|P^\sharp(t)|^2dt\right)^{1/2}\\
 &\ll_D T^{1/2}\log^{3/2}T
 =o(T).
\end{align*}

Finally, apply \eqref{eq:MV-bilinear} with $a_n=(\log n+\gamma)/\sqrt n,~ b_n=d_n/\sqrt n$.
We obtain
\begin{align}
 \int_Y^T P^\sharp(t)\overline{D_N(s_t)} \,dt
 &= (T-Y) \sum_{n\leq N} \frac{(\log n+\gamma)\overline{d_n}}{n} + O_D\left(N^{1/2}\log N\right).
 \label{eq:A-D-MV}
\end{align}
Here we used
\[
 \sum_{n\leq N}n|a_n|^2
 = \sum_{n\leq N}(\log n+\gamma)^2
 \ll N\log^2N
\]
and
\[
 \sum_{n\leq N}n|b_n|^2 = \sum_{n\leq N}|d_n|^2 \ll_D1.
\]

The series $\sum_{n=1}^{\infty} (\log n+\gamma)\overline{d_n}/n$ is absolutely convergent, and its tail satisfies
\[
 \sum_{n>N} \left| \frac{(\log n+\gamma)\overline{d_n}}{n} \right|
 \ll_D\frac{\log N}{N}.
\]
Therefore the diagonal term in \eqref{eq:A-D-MV} equals
\[
 T\sum_{n=1}^{\infty} \frac{(\log n+\gamma)\overline{d_n}}{n} + o_D(T).
\]
\end{proof}

\section{The diagonal coefficient and multiple zeta values}
\label{sec:diagonal-coefficient}

Here, we prove Theorem \ref{thm:intro-diagonal-MZV}.

\begin{proof}
Expanding the square in \eqref{eq:intro-diagonal-coefficient} and using $\overline{m^{-s_1}}=m^{-\overline{s_1}}$, we obtain
\begin{align*}
 \zeta_2^{[2]}(s_1,\rho)
 &= \sum_{n=1}^{\infty} \frac1{n^{\rho}} \left(\sum_{m=1}^{n-1}\frac1{m^{s_1}}\right) \overline{\left(\sum_{k=1}^{n-1}\frac1{k^{s_1}}\right)}
 = \sum_{n=1}^{\infty} \frac1{n^{\rho}} \sum_{m=1}^{n-1}\frac1{m^{s_1}} \sum_{k=1}^{n-1}\frac1{k^{\overline{s_1}}}\\
 &= \sum_{1\leq m,k<n} \frac1{m^{s_1}k^{\overline{s_1}}n^{\rho}}.
\end{align*}
We divide the range of summation into the three disjoint parts
\[
 m<k<n,\qquad k<m<n,\qquad m=k<n.
\]
The first part gives
\[
 \sum_{1\leq m<k<n} \frac1{m^{s_1}k^{\overline{s_1}}n^{\rho}}
 = \zeta_3\left(s_1,\overline{s_1},\rho\right),
\]
while the second, upon interchanging the roles of $m$ and $k$, gives
\[
 \sum_{1\leq k<m<n} \frac1{m^{s_1}k^{\overline{s_1}}n^{\rho}}
 = \zeta_3\left(\overline{s_1},s_1,\rho\right).
\]
Finally, the terms with $m=k$ contribute, since $s_1+\overline{s_1}=2\sigma_1$,
\[
 \sum_{1\leq m<n} \frac1{m^{s_1+\overline{s_1}}n^{\rho}}
 = \zeta_2\left(2\sigma_1,\rho\right).
\]
Adding the three contributions yields \eqref{eq:intro-diagonal-MZV}.  
\end{proof}

\section{Mean-square formulas on the boundary}
\label{sec:boundary-mean-square}

We now apply the decompositions and mean-value formulas established in the preceding section.  

\subsection{The case \texorpdfstring{$\zeta_2(1+a,\frac12+it)$}{zeta2(1+a,1/2+it)}}
\label{subsec:plus-family}

Here, we prove the first equation in Theorem \ref{thm:intro-boundary-main}.

\begin{proof}
By the Euler--Maclaurin decomposition obtained in Proposition~\ref{prop:boundary-decompositions}, we have
\begin{align}
 \zeta_2(1+a,s)
 &= \zeta(1+a)\zeta(s) - \frac1a\zeta(s+a) + B_a^{+}(s).
 \label{eq:plus-decomposition-boundary}
\end{align}
Note that $B_a^{+}(s)$ is absolutely convergent on $\Re s=1/2$, and Lemma~\ref{lem:mixed-Dirichlet-prelim} is applicable to it.

For fixed $u,v$, write
\[
 M_{u,v}(T) := \int_{T_a}^{T} \zeta(s_t+u)\, \overline{\zeta(s_t+v)} \,dt
\]
and
\[
 L_u(T) := \int_{T_a}^{T} \zeta(s_t+u)\, \overline{B_a^{+}(s_t)} \,dt.
\]
On inserting \eqref{eq:plus-decomposition-boundary} and expanding the square, we obtain
\begin{align}
 \int_{T_a}^{T} \left|\zeta_2(1+a,s_t)\right|^2 dt
 &= |\zeta(1+a)|^2M_{0,0}(T) + \frac1{|a|^2}M_{a,a}(T)
 - 2\Re\left\{ \frac{\zeta(1+a)}{\overline a}M_{0,a}(T) \right\} \notag\\
 &\quad
 +2\Re\left\{\zeta(1+a)L_0(T)\right\}-2\Re\left\{\frac1aL_a(T)\right\}+\int_{T_a}^{T}|B_a^{+}(s_t)|^2dt.
 \label{eq:plus-expanded-square}
\end{align}

We evaluate the terms on the right-hand side separately. 
The classical second-moment formula gives
\begin{align}
 M_{0,0}(T)
 &= T\log\frac{T}{2\pi} + (2\gamma-1)T + o(T).
 \label{eq:plus-M00}
\end{align}
Corollary~\ref{cor:fixed-shifted-second-moment-positive} gives
\begin{align}
 M_{0,a}(T) &= T\zeta(1+\overline a)+o_a(T), 
 \label{eq:plus-M0a}\\
 M_{a,a}(T) &= T\zeta(1+a+\overline a)+o_a(T).
 \label{eq:plus-Maa}
\end{align}
Notice that $1+a+\overline a=1+2\Re a>1$.

Lemma~\ref{lem:mixed-Dirichlet-prelim}, applied to $D(s)=B_a^{+}(s)$, yields
\begin{align}
 L_0(T) &= T\sum_{n=1}^{\infty} \frac{\overline{r_a^{+}(n)}}{n} + o_a(T), 
 \label{eq:plus-L0}\\
 L_a(T) &= T\sum_{n=1}^{\infty} \frac{\overline{r_a^{+}(n)}}{n^{1+a}} + o_a(T),  
 \label{eq:plus-La}
\end{align}
and
\begin{align}
 \int_{T_a}^{T} |B_a^{+}(s_t)|^2 dt
 &= T\sum_{n=1}^{\infty} \frac{|r_a^{+}(n)|^2}{n} + o_a(T).
 \label{eq:plus-B-square}
\end{align}

Substitution of \eqref{eq:plus-M00}--\eqref{eq:plus-B-square} into \eqref{eq:plus-expanded-square} gives
\begin{align}
 &\int_{T_a}^{T} \left|\zeta_2(1+a,s_t)\right|^2 dt \notag\\
 &=
 |\zeta(1+a)|^2T\log\frac{T}{2\pi}
 +T\Biggl[ |\zeta(1+a)|^2(2\gamma-1)+\frac{\zeta(1+2\Re a)}{|a|^2}
 -2\Re\left\{ \frac{|\zeta(1+a)|^2}{\overline a} \right\} \notag\\
 &\qquad\quad
 +2\Re\left\{ \zeta(1+a)\sum_{n=1}^{\infty} \frac{\overline{r_a^{+}(n)}}{n} \right\}
 -2\Re\left\{ \frac1a\sum_{n=1}^{\infty} \frac{\overline{r_a^{+}(n)}}{n^{1+a}} \right\}
 +\sum_{n=1}^{\infty} \frac{|r_a^{+}(n)|^2}{n} \Biggr]
 + o_a(T).
 \label{eq:plus-constant-expanded}
\end{align}

It remains to rewrite the expression in brackets in terms of the original generalized harmonic sums. 
From \eqref{eq:H-plus-AET},
\begin{align*}
 \left|H_{n-1}^{(1+a)}\right|^2-|\zeta(1+a)|^2
 &=
 \frac1{|a|^2}n^{-a-\overline a}
 -2\Re\left\{ \frac{\zeta(1+a)}{\overline a}n^{-\overline a} \right\}
 \notag\\
 &\quad
 +2\Re\left\{ \zeta(1+a)\overline{r_a^{+}(n)} \right\}
 -2\Re\left\{ \frac{\overline{r_a^{+}(n)}}{a n^a} \right\}
 +|r_a^{+}(n)|^2.
\end{align*}
After dividing by $n$ and summing over $n$, the first two terms on the right give
\[
 \frac{\zeta(1+a+\overline a)}{|a|^2}
 -2\Re\left\{ \frac{|\zeta(1+a)|^2}{\overline a} \right\}.
\]
Thus the entire expression in brackets in \eqref{eq:plus-constant-expanded} is precisely $C_{+}(a)$.
\end{proof}

\begin{remark}\label{rem:plus-renormalized-diagonal}
The convergence of the series in $C_+(a)$ follows directly from
\[
 H_{n-1}^{(1+a)}-\zeta(1+a) = -\frac1a n^{-a} + O_a\left(n^{-1-\Re a}\right).
\]
Indeed, putting $\delta_n:=H_{n-1}^{(1+a)}-\zeta(1+a)$, we have
\begin{align*}
 \left|H_{n-1}^{(1+a)}\right|^2 - |\zeta(1+a)|^2
 &=\left|\zeta(1+a)+\delta_n\right|^2-|\zeta(1+a)|^2\\
 &= 2\operatorname{Re} \left\{ \overline{\zeta(1+a)}\,\delta_n \right\} + |\delta_n|^2\\
 &\ll_a n^{-\Re a}.
\end{align*}
Consequently,
\[
 \sum_{n=1}^{\infty} \frac{ |H_{n-1}^{(1+a)}|^2-|\zeta(1+a)|^2 }{n}
 \ll_a \sum_{n=1}^{\infty}\frac1{n^{1+\Re a}}
 < \infty.
\]
\end{remark}

\begin{remark}\label{Remark4.2}
The two terms in $C_+(a)$ have different origins. 
The first term $|\zeta(1+a)|^2(2\gamma-1)$ is inherited from the classical mean-square formula for the Riemann zeta-function. 
The second term,
\[
 \sum_{n=1}^{\infty} \frac{|H_{n-1}^{(1+a)}|^2-|\zeta(1+a)|^2}{n},
\]
is the harmonic finite part of the diagonal Dirichlet series $\zeta_2^{[2]}(s_1,\rho)$ at $(s_1,\rho)=(1+a,1)$, which corresponds to the boundary point $(s_1,\sigma_2)=(1+a,1/2)$ of the mean-square problem.
Indeed, it is obtained from the truncated diagonal sum by subtracting its divergent part $|\zeta(1+a)|^2H_N$ and then letting $N\to\infty$. 
When $a=k-1$ with $k\geq2$, this harmonic finite part becomes $2\zeta^*(k,k,1)+\zeta^*(2k,1)$.
\end{remark}

\begin{remark}\label{Remark4.3}
The finite part above is closely related to an Euler-type constant of $\zeta_2^{[2]}(1+a,\rho) = \zeta_3(1+a,1+\overline{a},\rho)+\zeta_3(1+\overline{a},1+a,\rho)+\zeta_2(2+2\Re a,\rho)$. 
Regard $\zeta_2^{[2]}(1+a,\rho)$ as a function of the complex variable $\rho$. 
Then it has a simple pole at $\rho=1$ with residue $|\zeta(1+a)|^2$, and
\begin{align}
 \zeta_2^{[2]}(1+a,\rho)
 &= \frac{|\zeta(1+a)|^2}{\rho-1} + \gamma_+(a) + O_a\bigl(|\rho-1|\bigr), 
 \label{eq:gamma-plus-Laurent}
\end{align}
where
\begin{align}
 \gamma_+(a)
 &:= \lim_{N\to\infty} \left\{ \sum_{n=1}^{N} \frac{|H_{n-1}^{(1+a)}|^2}{n}-|\zeta(1+a)|^2\log N \right\}. 
 \label{eq:def-gamma-plus}
\end{align}

Indeed, as observed above, the Dirichlet series
\[
 R_a(\rho) := \sum_{n=1}^{\infty} \frac{|H_{n-1}^{(1+a)}|^2-|\zeta(1+a)|^2}{n^\rho}
\]
converges absolutely and defines a holomorphic function in a neighborhood of $\rho=1$. 
Hence $R_a(\rho)=R_a(1)+O_a(|\rho-1|)$.
Since $\zeta_2^{[2]}(1+a,\rho)=|\zeta(1+a)|^2\zeta(\rho)+R_a(\rho)$, the Laurent expansion of the Riemann zeta-function at $\rho=1$ gives
\begin{align*}
 \zeta_2^{[2]}(1+a,\rho)
 &= \frac{|\zeta(1+a)|^2}{\rho-1} + \gamma|\zeta(1+a)|^2 + R_a(1) + O_a\bigl(|\rho-1|\bigr).
\end{align*}
On the other hand, using $\lim_{N\to\infty}(\sum_{n\leq N}n^{-1}-\log N) = \gamma$, we obtain
\begin{align*}
 \gamma_+(a)
 &= \gamma|\zeta(1+a)|^2 + R_a(1).
\end{align*}
Thus the regularized limit \eqref{eq:def-gamma-plus} coincides with the constant term in the Laurent expansion \eqref{eq:gamma-plus-Laurent}.

Note that the normalization in \eqref{eq:def-gamma-plus} subtracts $|\zeta(1+a)|^2\log N$, whereas the harmonic finite part in Remark~\ref{Remark4.2} subtracts $|\zeta(1+a)|^2H_N$.
The latter finite part is therefore $R_a(1)=\gamma_+(a)-\gamma|\zeta(1+a)|^2$, and consequently $C_{+}(a)=\gamma_+(a)+(\gamma-1)|\zeta(1+a)|^2$.
\end{remark}

\subsection{The case \texorpdfstring{$\zeta_2(1+ib,\frac12+it)$}{zeta2(1+ib,1/2+it)}}
\label{subsec:imaginary-family}

Throughout this subsection, let $b\in\mathbb R\setminus\{0\}$ be fixed. 
The case $b=0$ will be treated separately in the next subsection.

\begin{proof}[Proof of Theorem~\ref{thm:intro-boundary-main}, second formula]
Recall from \eqref{eq:decomposition-imaginary} that
\begin{equation*}
 \zeta_2(1+ib,s_t) = \zeta(1+ib)\zeta(s_t) + \frac i{b}\zeta(s_t+ib) + B_b(s_t).
\end{equation*}
Expanding the square and integrating, we obtain
\begin{align*}
 \int_2^T|\zeta_2(1+ib,s_t)|^2dt
 &= |\zeta(1+ib)|^2\int_2^T|\zeta(s_t)|^2dt
 +\left|\frac{i}{b}\right|^2\int_2^T|\zeta(s_t+ib)|^2dt
 +\int_2^T|B_b(s_t)|^2dt
 \notag\\
 &\quad
 +2\operatorname{Re}\left\{\zeta(1+ib)\overline{\frac{i}{b}}\int_2^T \zeta(s_t)\overline{\zeta(s_t+ib)}\,dt\right\}
 \notag\\
 &\quad
 +2\operatorname{Re}\left\{\zeta(1+ib)\int_2^T \zeta(s_t)\overline{B_b(s_t)}\,dt\right\}
 \notag\\
 &\quad
 +2\operatorname{Re}\left\{\frac{i}{b}\int_2^T \zeta(s_t+ib)\overline{B_b(s_t)}\,dt\right\}.
\end{align*}

We now evaluate the six terms on the right-hand side. 
By \eqref{eq:classical-zeta-mean-square} and \eqref{eq:pure-imaginary-shift-square},
\begin{align*}
 &\int_2^T|\zeta(s_t)|^2dt = TX+(2\gamma-1)T+o(T),\\
 &\int_2^T|\zeta(s_t+ib)|^2dt = TX+(2\gamma-1)T+o_b(T),
\end{align*}
where $X=\log(T/(2\pi))$. 
Corollary~\ref{cor:pure-imaginary-shift} gives
\begin{align}
 \int_2^T \zeta(s_t)\overline{\zeta(s_t+ib)}\,dt
 &=T\zeta(1-ib) + \frac{\zeta(1+ib)}{1+ib}Te^{ibX} + o_b(T).
 \label{eq:imaginary-family-zeta-cross}
\end{align}

Next apply Lemma~\ref{lem:mixed-Dirichlet-prelim} with $D=B_b$. 
First, taking $u=0$, we obtain
\begin{align*}
 \int_2^T \zeta(s_t)\overline{B_b(s_t)}\,dt
 &=T\sum_{n=1}^{\infty}\frac{\overline{r_b(n)}}{n} + o_b(T) \notag\\
 &=T\overline{B_b(1)}+o_b(T).
\end{align*}
Taking $u=ib$ instead gives
\begin{align*}
 \int_2^T \zeta(s_t+ib)\overline{B_b(s_t)}\,dt
 &=T\sum_{n=1}^{\infty} \frac{~\overline{r_b(n)}~}{n^{1+ib}} + o_b(T) \notag\\
 &=T\overline{B_b(1-ib)}+o_b(T).
\end{align*}
Finally, \eqref{eq:D-square-prelim} gives
\begin{align*}
 \int_2^T|B_b(s_t)|^2dt
 &=T\sum_{n=1}^{\infty}\frac{|r_b(n)|^2}{n}+O_b(1).
\end{align*}

It remains to collect these formulas. 
The first term of the right-hand side of \eqref{eq:imaginary-family-zeta-cross} vanishes, since
\begin{align*}
 2\operatorname{Re}\left\{ \zeta(1+ib)\overline{\frac{i}{b}}\, T\zeta(1-ib) \right\}
 = 2T|\zeta(1+ib)|^2 \operatorname{Re}\left(-\frac{i}{b}\right)
 = 0.
\end{align*}
Its remaining part is
\begin{align*}
 &2\operatorname{Re}\left\{\zeta(1+ib)\overline{\frac{i}{b}}\,\frac{\zeta(1+ib)}{1+ib}Te^{ibX}\right\}
 = -2T\operatorname{Re}\left\{\frac{i\zeta(1+ib)^2}{b(1+ib)}e^{ibX}\right\}
 = T\mathcal E_b(X).
\end{align*}

We therefore obtain
\begin{align*}
 &\int_2^T|\zeta_2(1+ib,s_t)|^2dt\\
 &=
 \left(|\zeta(1+ib)|^2+b^{-2}\right)TX+T\mathcal E_b(X)\\
 &\quad+
 T\Biggl[
  (2\gamma-1)\left(|\zeta(1+ib)|^2+b^{-2}\right)
  +2\operatorname{Re}\left\{
    \zeta(1+ib)\overline{B_b(1)}
    +\frac{i}{b}\overline{B_b(1-ib)}
   \right\}
  +\sum_{n=1}^{\infty}\frac{|r_b(n)|^2}{n}
 \Biggr]\\
 &\quad+o_b(T).
\end{align*}

The constant expression in square brackets has a more compact form. 
Indeed, by \eqref{eq:H-imaginary-AET}, we have
\begin{align*}
 \left|H_{n-1}^{(1+ib)}\right|^2 - \left|\zeta(1+ib)+\frac{i}{b}n^{-ib}\right|^2
 = 2\operatorname{Re}\left\{\left(\zeta(1+ib)+\frac{i}{b}n^{-ib}\right)\overline{r_b(n)}\right\} + |r_b(n)|^2.
\end{align*}
Dividing this identity by $n$ and then summing over $n\geq1$, we obtain
\begin{align}
 \mathcal D_b
 &= (2\gamma-1)\left(|\zeta(1+ib)|^2+b^{-2}\right)
 +\sum_{n=1}^{\infty}
 \frac{|H_{n-1}^{(1+ib)}|^2-\left|\zeta(1+ib)+\frac{i}{b}n^{-ib}\right|^2}{n}.
 \label{eq:D-b-compact}
\end{align}
Since $r_b(n)\ll_b n^{-1}$ by \eqref{eq:r-imaginary-AET}, the summand in \eqref{eq:D-b-compact} is $O_b(n^{-2})$. 
Hence the series converges absolutely. 
This completes the proof.
\end{proof}

\begin{remark}\label{Remark4.5}
The convergent series in \eqref{eq:D-b-compact} also has an interpretation in terms of an Euler-type constant. 
Define
\begin{align*}
 \gamma_b
 &:= \gamma\left(|\zeta(1+ib)|^2+b^{-2}\right)
 +\sum_{n=1}^{\infty} \frac{|H_{n-1}^{(1+ib)}|^2-\left|\zeta(1+ib)+\frac{i}{b}n^{-ib}\right|^2}{n}.
\end{align*}
Then, regarding
$\zeta_2^{[2]}(1+ib,\rho)=\zeta_3(1+ib,1-ib,\rho)+\zeta_3(1-ib,1+ib,\rho)+\zeta_2(2,\rho)$ as a function of $\rho$, we have
\begin{align}
 \zeta_2^{[2]}(1+ib,\rho)
 &= \frac{|\zeta(1+ib)|^2+b^{-2}}{\rho-1} + \gamma_b + O_b\bigl(|\rho-1|\bigr). 
 \label{eq:gamma-b-Laurent}
\end{align}

Indeed, if
\[
 R_b(\rho)
 := \sum_{n=1}^{\infty} \frac{|H_{n-1}^{(1+ib)}|^2-\left|\zeta(1+ib)+\frac{i}{b}n^{-ib}\right|^2}{n^\rho},
\]
then $R_b(\rho)$ is holomorphic in a neighborhood of $\rho=1$, and
\begin{align*}
 \zeta_2^{[2]}(1+ib,\rho)
 &= \left(|\zeta(1+ib)|^2+b^{-2}\right)\zeta(\rho) - \frac{i}{b}\zeta(1+ib)\zeta(\rho-ib) + \frac{i}{b}\overline{\zeta(1+ib)}\zeta(\rho+ib) + R_b(\rho).
\end{align*}
At $\rho=1$, the second and third terms on the right-hand side cancel.
This proves \eqref{eq:gamma-b-Laurent}.

Equivalently,
\begin{align*}
 \gamma_b
 = \lim_{N\to\infty}
 \left\{\sum_{n=1}^{N}\frac{|H_{n-1}^{(1+ib)}|^2}{n}-\left(|\zeta(1+ib)|^2+b^{-2}\right)\log N+\frac{2}{b^2}\operatorname{Re}\left(\zeta(1+ib)N^{ib}\right)\right\}.
\end{align*}
Thus $\gamma_b$ is the finite part obtained after removing both the logarithmic term and the bounded oscillatory term from the partial sums of the diagonal series. 
In terms of this Euler-type constant, \eqref{eq:D-b-compact} becomes $\mathcal D_b=\gamma_b+(\gamma-1)\left(|\zeta(1+ib)|^2+b^{-2}\right)$.
\end{remark}

\subsection{The case \texorpdfstring{$\zeta_2(1,\frac12+it)$}{zeta2(1,1/2+it)}}
\label{subsec:corner-family}

Here, we prove the third equation in Theorem \ref{thm:intro-boundary-main}.

\begin{proof}
\eqref{eq:decomposition-zero} gives
\begin{align*}
 \zeta_2(1,s) &= -\zeta'(s)+\gamma\zeta(s)+B_0(s).
\end{align*}
It follows that
\begin{align}
 \int_2^T|\zeta_2(1,s_t)|^2dt
 &= \int_2^T|-\zeta'(s_t)+\gamma\zeta(s_t)|^2dt
 +2\Re\int_2^T (-\zeta'(s_t)+\gamma\zeta(s_t))\overline{B_0(s_t)}\,dt \notag\\
 &\quad +\int_2^T|B_0(s_t)|^2dt.
 \label{eq:corner-square-expansion}
\end{align}

By Corollary~\ref{cor:mean-square-A}, we have
\begin{align}
 \int_2^T|-\zeta'(s_t)+\gamma\zeta(s_t)|^2dt
 &= T P_{-\zeta'+\gamma\zeta}\left(\log\frac{T}{2\pi}\right)+o(T).
 \label{eq:corner-A-square}
\end{align}

The mixed mean-value formula in Corollary~\ref{cor:A-D-mixed-prelim}, applied to $D(s)=B_0(s)$, gives
\begin{align}
 \int_2^T (-\zeta'(s_t)+\gamma\zeta(s_t))\overline{B_0(s_t)}\,dt
 &= T\sum_{n=1}^{\infty} \frac{(\log n+\gamma)r_0(n)}{n} + o(T).
 \label{eq:corner-A-B-mixed}
\end{align}
Here $r_0(n)$ is real. 
Similarly, Lemma~\ref{lem:mixed-Dirichlet-prelim} gives
\begin{align}
 \int_2^T|B_0(s_t)|^2dt &= T\sum_{n=1}^{\infty} \frac{r_0(n)^2}{n} + o(T).
 \label{eq:corner-B-square}
\end{align}

Substituting \eqref{eq:corner-A-square}, \eqref{eq:corner-A-B-mixed}, and \eqref{eq:corner-B-square} into \eqref{eq:corner-square-expansion}, we obtain
\begin{align}
 \int_2^T|\zeta_2(1,s_t)|^2dt
 &= T P_{-\zeta'+\gamma\zeta}(X)+T\sum_{n=1}^{\infty} \frac{2(\log n+\gamma)r_0(n)+r_0(n)^2}{n}+o(T). 
 \label{eq:corner-before-R0}
\end{align}
By \eqref{eq:H-zero-AET}, we have
\begin{align}
 \sum_{n=1}^{\infty} \frac{2(\log n+\gamma)r_0(n)+r_0(n)^2}{n}
 = \sum_{n=1}^{\infty} \frac{H_{n-1}^2-(\log n+\gamma)^2}{n}.
 \label{eq:def-R-zero}
\end{align}
The series in \eqref{eq:def-R-zero} is convergent, since $r_0(n)=O(n^{-1})$ by \eqref{eq:r-zero-AET}.

For $N\geq1$, we introduce the truncated multiple zeta sums
\[
 \zeta_N(k_1,\ldots,k_r)
 := \sum_{1\leq n_1<\cdots<n_r\leq N} \frac1{n_1^{k_1}\cdots n_r^{k_r}}.
\]
Expanding $H_{n-1}^2$ gives
\begin{align}
 \sum_{n=1}^{N}\frac{H_{n-1}^2}{n} &= 2\zeta_N(1,1,1)+\zeta_N(2,1).
 \label{eq:corner-harmonic-multiple}
\end{align}
On the other hand, the finite harmonic product identity gives
\begin{align}
 H_N^3 &= 6\zeta_N(1,1,1) + 3\zeta_N(2,1) + 3\zeta_N(1,2) + \zeta_N(3).
 \label{eq:finite-cubic-harmonic-product}
\end{align}
Combining \eqref{eq:corner-harmonic-multiple} and \eqref{eq:finite-cubic-harmonic-product}, we find
\begin{align}
 \sum_{n=1}^{N}\frac{H_{n-1}^2}{n}
 &= \frac13H_N^3 - \zeta_N(1,2) - \frac13\zeta_N(3).
 \label{eq:corner-harmonic-square-identity}
\end{align}

As $N\to\infty$,
\begin{align*}
 &H_N = \log N+\gamma+O\left(\frac1N\right),\\
 &\zeta_N(1,2) \longrightarrow \zeta_2(1,2)=\zeta(3),\\
 &\zeta_N(3) \longrightarrow \zeta(3).
\end{align*}
Therefore, we obtain
\begin{align}
 \sum_{n=1}^{N}\frac{H_{n-1}^2}{n}
 &= \frac13(\log N+\gamma)^3 - \frac43\zeta(3)+o(1).
 \label{eq:corner-H-square-asymptotic}
\end{align}

By the definition of the Stieltjes constants
\begin{align*}
 &\sum_{n=1}^{N}\frac{\log n}{n} = \frac12\log^2 N+\gamma_1+o(1),\\
 &\sum_{n=1}^{N}\frac{(\log n)^2}{n} = \frac13\log^3 N+\gamma_2+o(1),
\end{align*}
we have
\begin{align}
 \sum_{n=1}^{N}\frac{(\log n+\gamma)^2}{n}
 &= \frac13\log^3 N+\gamma \log^2 N+\gamma^2\log N+\gamma^3+2\gamma\gamma_1+\gamma_2+o(1).
 \label{eq:shifted-log-square-asymptotic}
\end{align}
Subtracting \eqref{eq:shifted-log-square-asymptotic} from \eqref{eq:corner-H-square-asymptotic} gives
\begin{align}
 \sum_{n=1}^{\infty} \frac{2(\log n+\gamma)r_0(n)+r_0(n)^2}{n}
 &= -\frac23\gamma^3-2\gamma\gamma_1-\gamma_2-\frac43\zeta(3).
 \label{eq:R-zero-evaluation}
\end{align}

Finally, adding \eqref{eq:R-zero-evaluation} to the constant term of $P_{-\zeta'+\gamma\zeta}(X)$ in \eqref{eq:corner-before-R0}, we obtain $C_0$.
\end{proof}

\begin{remark}\label{rem:corner-regularization}
The identities \eqref{eq:corner-harmonic-multiple} and \eqref{eq:corner-harmonic-square-identity} also give
\begin{align}
 &\lim_{N\to\infty} \left\{2\zeta_N(1,1,1)+\zeta_N(2,1)-\frac13H_N^3\right\}
 = -\frac43\zeta(3).
 \label{eq:corner-regularized-MZV}
\end{align}
Thus the occurrence of $-4\zeta(3)/3$ in $C_0$ is not accidental. 
It is the finite part of the divergent diagonal multiple zeta sum at the corner. 
We shall return to its interpretation in terms of regularized multiple zeta values in Section~\ref{sec:regularized-MZV}.
\end{remark}

\subsection{The case \texorpdfstring{$\zeta_2(1-a,\frac12+a+it)$}{zeta2(1-a,1/2+a+it)}}
\label{subsec:minus-family}

Finally, we prove the last equation in Theorem \ref{thm:intro-boundary-main}.

\begin{proof}
The Euler--Maclaurin decomposition in Proposition~\ref{prop:boundary-decompositions} gives
\begin{align}
 \zeta_2(1-a,s+a) &= \frac1a\zeta(s) + \zeta(1-a)\zeta(s+a) + B_a^{-}(s).  
 \label{eq:minus-decomposition}
\end{align}
Note that $B_a^{-}(s)$ converges absolutely on $\Re s=1/2$.

As before, write
\begin{align*}
 M_{u,v}(T) &:= \int_{T_a}^{T} \zeta(s_t+u)\, \overline{\zeta(s_t+v)}\,dt,\\
 L_u^{-}(T) &:= \int_{T_a}^{T} \zeta(s_t+u)\, \overline{B_a^{-}(s_t)}\,dt.
\end{align*}
Expanding the square in \eqref{eq:minus-decomposition}, we obtain
\begin{align}
 \int_{T_a}^{T} \left|\zeta_2(1-a,s_t+a)\right|^2 dt
 =&
 \frac1{|a|^2}M_{0,0}(T)
 +|\zeta(1-a)|^2M_{a,a}(T)
 +2\Re\left\{\frac{\overline{\zeta(1-a)}}{a}M_{0,a}(T)\right\} \notag\\
 &+2\Re\left\{\frac1aL_0^{-}(T)\right\}+2\Re\left\{\zeta(1-a)L_a^{-}(T)\right\}+\int_{T_a}^{T}|B_a^{-}(s_t)|^2dt.
 \label{eq:minus-expanded-square}
\end{align}

The classical second-moment formula gives
\begin{align*}
 M_{0,0}(T) &= T\log\frac{T}{2\pi} + (2\gamma-1)T + o(T). 
\end{align*}
Corollary \ref{cor:fixed-shifted-second-moment-positive} gives
\begin{align*}
 M_{0,a}(T) &= T\zeta(1+\overline a)+o_a(T), \\
 M_{a,a}(T) &= T\zeta(1+a+\overline a)+o_a(T).
\end{align*}

Since $r_a^{-}(n)\ll_a n^{-1}$, Lemma~\ref{lem:mixed-Dirichlet-prelim} can be applied to $B_a^{-}(s)$. 
It gives
\begin{align*}
 L_0^{-}(T) &= T\sum_{n=1}^{\infty} \frac{\overline{r_a^{-}(n)}}{n} + o_a(T),\\
 L_a^{-}(T) &= T\sum_{n=1}^{\infty} \frac{~\overline{r_a^{-}(n)}~}{n^{1+a}} + o_a(T),
\end{align*}
and
\begin{align*}
 \int_{T_a}^{T}|B_a^{-}(s_t)|^2dt
 &= T\sum_{n=1}^{\infty} \frac{|r_a^{-}(n)|^2}{n} + o_a(T).
\end{align*}

Substituting these formulas into \eqref{eq:minus-expanded-square}, we find
\begin{align}
 \int_{T_a}^{T}  &  \left|\zeta_2(1-a,s_t+a)\right|^2 dt=\frac{T}{|a|^2}\log\frac{T}{2\pi}\notag\\
 &\quad+T\Biggl[\frac{2\gamma-1}{|a|^2}+|\zeta(1-a)|^2\zeta(1+a+\overline a)
 +2\Re\left\{\frac{~\overline{\zeta(1-a)}~}{a}\zeta(1+\overline a)\right\}
 \notag\\
 &\qquad
  +2\Re\left\{\frac1a\sum_{n=1}^{\infty}\frac{\overline{r_a^{-}(n)}}{n}\right\}
 +2\Re\left\{ \zeta(1-a)\sum_{n=1}^{\infty}\frac{~\overline{r_a^{-}(n)}~}{n^{1+a}}\right\}
 +\sum_{n=1}^{\infty}\frac{|r_a^{-}(n)|^2}{n}\Biggr]
 +o_a(T).
 \label{eq:minus-constant-expanded}
\end{align}

We now rewrite the expression in brackets in terms of the original generalized harmonic sums.  
By \eqref{eq:H-minus-AET}, it follows that
\begin{align*}
 |n^{-a}H_{n-1}^{(1-a)}|^2-\frac1{|a|^2}
 &= |\zeta(1-a)|^2n^{-a-\overline a}
 +2\Re\left\{\frac{~\overline{\zeta(1-a)}~}{a}n^{-\overline a}\right\} \notag\\
 &\quad
 +2\Re\left\{\frac{~\overline{r_a^{-}(n)}~}{a}\right\}
 +2\Re\left\{\zeta(1-a)n^{-a}\overline{r_a^{-}(n)}\right\}
 +|r_a^{-}(n)|^2.
\end{align*}
Dividing by $n$ and summing over $n$, we obtain
\begin{align*}
 &\sum_{n=1}^{\infty} \frac{|n^{-a}H_{n-1}^{(1-a)}|^2-|a|^{-2}}{n} \notag\\
 &=
 |\zeta(1-a)|^2\zeta(1+a+\overline a)
 +2\Re\left\{\frac{~\overline{\zeta(1-a)}~}{a}\zeta(1+\overline a)\right\} \notag\\
 &\quad
 +2\Re\left\{\frac1a \sum_{n=1}^{\infty} \frac{\overline{r_a^{-}(n)}}{n}\right\}
 +2\Re\left\{\zeta(1-a) \sum_{n=1}^{\infty} \frac{~\overline{r_a^{-}(n)}~}{n^{1+a}}\right\}
 +\sum_{n=1}^{\infty} \frac{|r_a^{-}(n)|^2}{n}.
\end{align*}
Thus, the expression in brackets in \eqref{eq:minus-constant-expanded} is exactly $C_{-}(a)$.
\end{proof}

\begin{remark}\label{Remark4.7}
The finite part occurring in $C_-(a)$ is likewise closely related to an Euler-type
constant of
$\zeta_2^{[2]}(1-a,\rho+2\Re a) = \zeta_3(1-a,1-\overline{a},\rho+2\Re a)
+\zeta_3(1-\overline{a},1-a,\rho+2\Re a)+\zeta_2(2-2\Re a,\rho+2\Re a)$.
Regard it as a function of the complex variable $\rho$.
Then it has a simple pole at $\rho=1$ with residue $|a|^{-2}$, and
\begin{align}
 \zeta_2^{[2]}(1-a,\rho+2\Re a) &= \frac{1}{|a|^2(\rho-1)} + \gamma_-(a) + O_a\bigl(|\rho-1|\bigr), 
 \label{eq:gamma-minus-Laurent}
\end{align}
where
\begin{align*}
 \gamma_-(a) &:= \lim_{N\to\infty}
 \left\{\sum_{n=1}^{N}\frac{|n^{-a}H_{n-1}^{(1-a)}|^2}{n}-\frac{1}{|a|^2}\log N\right\}.
\end{align*}

Indeed, \eqref{eq:H-minus-AET} gives
$|n^{-a}H_{n-1}^{(1-a)}|^2-|a|^{-2} \ll_a n^{-\min\{\Re a,1\}}$.
Consequently, the Dirichlet series
\[
 R_{-,a}(\rho) := \sum_{n=1}^{\infty} \frac{|n^{-a}H_{n-1}^{(1-a)}|^2-|a|^{-2}}{n^\rho}
\]
converges absolutely and defines a holomorphic function in a neighborhood of $\rho=1$. 
We therefore have
\begin{align*}
 \zeta_2^{[2]}(1-a,\rho+2\Re a)
 &= \frac1{|a|^2}\zeta(\rho)+R_{-,a}(\rho)\\
 &= \frac{1}{|a|^2(\rho-1)} + \frac{\gamma}{|a|^2} + R_{-,a}(1) + O_a\bigl(|\rho-1|\bigr).
\end{align*}
On the other hand, the definition of Euler's constant gives
\begin{align*}
 \gamma_-(a) &= \frac{\gamma}{|a|^2}
 +\sum_{n=1}^{\infty} \frac{|n^{-a}H_{n-1}^{(1-a)}|^2-|a|^{-2}}{n}.
\end{align*}
Thus the regularized limit defining $\gamma_-(a)$ coincides with the constant term
in the Laurent expansion \eqref{eq:gamma-minus-Laurent}.

Note that this normalization subtracts $|a|^{-2}\log N$, whereas subtracting
$|a|^{-2}H_N$ gives the harmonic finite part $R_{-,a}(1)=\gamma_-(a)-\gamma|a|^{-2}$,
which is precisely the series occurring in $C_-(a)$.
Consequently $C_-(a) = \gamma_-(a)+(\gamma-1)|a|^{-2}$.
\end{remark}

\section{Regularized multiple zeta values}
\label{sec:regularized-MZV}

We conclude by interpreting the constants $C_+(k-1)$ and $C_0$.

For a positive integral index $\boldsymbol{k}=(k_1,\ldots,k_r)\in\mathbb N^r$, recall that
\[
 \zeta_N(\boldsymbol{k}) := \sum_{1\leq n_1<\cdots<n_r\leq N} \frac1{n_1^{k_1}\cdots n_r^{k_r}}.
\]
The harmonic regularization of multiple zeta values is described by polynomials $\zeta^{*}(\boldsymbol{k};Y)\in\mathcal Z[Y]$, characterized by
\begin{align}
 \zeta_N(\boldsymbol{k}) &= \zeta^{*}(\boldsymbol{k};H_N)+o(1) \qquad (N\to\infty),
 \label{eq:harmonic-regularization}
\end{align}
where $\mathcal Z$ denotes the algebra generated by convergent multiple zeta values. 
In particular, $\zeta^{*}(1;Y)=Y$.
If $\boldsymbol{k}$ is admissible, then $\zeta^{*}(\boldsymbol{k};Y)=\zeta_r(\boldsymbol{k})$.
We write $\zeta^{*}(\boldsymbol{k}):=\zeta^{*}(\boldsymbol{k};0)$ for the constant term of the harmonic regularization. 
This regularization is compatible with the finite harmonic product relations; see, for example, \cite{IharaKanekoZagier}.

Here, we prove Theorem~\ref{thm:intro-regularized-main}.

\begin{proof}
We first consider the case $k\geq2$. 
Expanding the square of the generalized harmonic number gives the finite identity
\begin{align*}
 \sum_{n=1}^{N}\frac{\left(H_{n-1}^{(k)}\right)^2}{n}
 &= 2\zeta_N(k,k,1)+\zeta_N(2k,1).
\end{align*}

Since $H_{n-1}^{(k)}=\zeta(k)+O_k(n^{1-k})$, the series
\[
 \sum_{n=1}^{\infty} \frac{\left(H_{n-1}^{(k)}\right)^2-\zeta(k)^2}{n}
\]
converges absolutely. 
Therefore,
\begin{align}
 &\lim_{N\to\infty} \left\{2\zeta_N(k,k,1)+\zeta_N(2k,1)-\zeta(k)^2H_N\right\}
 = \sum_{n=1}^{\infty} \frac{\left(H_{n-1}^{(k)}\right)^2-\zeta(k)^2}{n}.
 \label{eq:finite-part-k}
\end{align}
By \eqref{eq:harmonic-regularization}, the left-hand side of \eqref{eq:finite-part-k} is the constant term of $2\zeta^{*}(k,k,1;Y)+\zeta^{*}(2k,1;Y)-\zeta(k)^2Y$.
It follows that
\begin{align*}
 \sum_{n=1}^{\infty} \frac{\left(H_{n-1}^{(k)}\right)^2-\zeta(k)^2}{n}
 &= 2\zeta^{*}(k,k,1) + \zeta^{*}(2k,1).
\end{align*}
Taking $a=k-1$, this proves the first assertion for $k\geq2$.

We now consider the corner.  
It is known that
\begin{align}
 \zeta^{*}(1,1,1;Y) &= \frac16Y^3 - \frac12\zeta(2)Y + \frac13\zeta(3)
 \label{eq:zeta111-regularized}
\end{align}
and
\begin{align}
 \zeta^{*}(2,1;Y) &= \zeta(2)Y - 2\zeta(3).
 \label{eq:zeta21-regularized}
\end{align}
Setting $Y=0$ in \eqref{eq:zeta111-regularized} and \eqref{eq:zeta21-regularized}, we obtain
\begin{align*}
 2\zeta^{*}(1,1,1)+\zeta^{*}(2,1) &= -\frac43\zeta(3),
\end{align*}
which proves the second assertion.
\end{proof}

\begin{remark}\label{rem:minus-family-regularization}
The remaining case $\zeta_2\left(1-a,1/2+a+it\right)$ has the normalized coefficients $n^{-a}H_{n-1}^{(1-a)}$.
When $a=k$ is a positive integer, the exponent $1-k$ is non-positive. 
Thus this case does not directly belong to the usual harmonic regularization theory for multiple zeta values with positive integral indices.

In fact, for $k\geq2$, Faulhaber's formula with the convention $B_1=-1/2$ gives
\[
 n^{-k}H_{n-1}^{(1-k)} = \frac1k + \frac1k \sum_{j=1}^{k-1} \binom{k}{j}B_jn^{-j},
\]
where $B_j$ denotes the $j$-th Bernoulli number. 
Consequently, $C_-(k)$ can be written as a finite rational linear combination of $1$, Euler's constant $\gamma$, and ordinary zeta values.
\end{remark}


\section*{Acknowledgements}
The author would like to thank Professor Hideki Murahara for his valuable advice and helpful comments.


\section*{AI tool disclosure}
The author used OpenAI's ChatGPT during the development of this work
to discuss mathematical ideas and proof arguments, and during
manuscript preparation to improve the English exposition.
The author takes full responsibility for the mathematical claims
and the final manuscript.


\end{document}